\newtheorem{theorem}{Theorem}[section]
\newtheorem{definition}[theorem]{Definition}
\newtheorem{proposition}[theorem]{Proposition}
\newtheorem{remark}[theorem]{Remark}
\title{\bf Function Theories in Cayley-Dickson algebras and Number Theory}
\author{
Rolf S\"{o}ren Krau{\ss}har\\
Fachbereich Mathematik\\
Erziehungswissenschaftliche Fakult\"at\\
Universit\"at Erfurt\\
Nordh\"auser Str. 63\\
99089 Erfurt, Germany\\
soeren.krausshar@uni-erfurt.de }
\begin{document}
\maketitle
\begin{abstract} In the recent years a lot of effort has been made to extend the theory of hyperholomorphic functions from the setting of associative Clifford algebras to non-associative Cayley-Dickson algebras, starting with the octonions.   An important question is whether there appear really essentially different features in the treatment with Cayley-Dickson algebras that cannot be handled in the Clifford analysis setting. Here we give one concrete example. Cayley-Dickson algebras namely admit the construction of direct analogues of CM-lattices, in particular lattices that are closed under multiplication.  Canonical examples are lattices with components from the algebraic number fields $\mathbb{Q}[\sqrt{m_1},\ldots\sqrt{m_k}]$. 
Note that the multiplication of two non-integer lattice paravectors does not give anymore a lattice paravector in the Clifford algebra. In this paper we exploit the tools of octonionic function theory  to set up an algebraic relation between different octonionic generalized elliptic functions which give rise to octonionic elliptic curves. We present formulas for the trace of the octonionic CM-division values.
\end{abstract}
{\bf Key words}: function theory in Cayley-Dickson algebras, generalized elliptic functions, generalized CM lattices, algebraic number fields

\noindent {\bf Mathematical Review Classification numbers}: 11G15, 30G35

\section{Introduction}
 
 There are a number of different possibilities to generalize complex function theory to higher dimensions.
 \par\medskip\par  
 
 One classical and well-established option is to consider functions in several complex variables in $\mathbb{C}^n$ where the classical holomorphicity concept is  applied separately to each complex variable, see for example \cite{Freitag, hoermander}. From the viewpoint of algebraic geometry, the theory of several complex variables provides the natural setting to study Abelian varieties and curves.  
  \par\medskip\par 
 Another possibility is offered by Clifford analysis which considers null-solutions to a generalized Cauchy-Riemann operator that are defined in a subset of vectors or paravectors and that take values in an associative Clifford algebra, see for instance \cite{bds,dss,ghs,Kra2004}.  
  \par\medskip\par 

 In the recent years one also observes a lot of progress in extending the constructions from the setting of associative Clifford algebras to non-associative Cayley-Dickson algebras, in particular to the framework of octonions, see for example \cite{FlautSh,FL,GP,KO2018,KO2019,Nolder2018}. 
  \par\medskip\par 
 Although one has no associativity anymore, at least in the octonionic case it was possible to also generalize a number of classical theorems, such as the Cauchy integral formula or the formulas for the Taylor and Laurent series representations by following more or less the same line of argumentation as performed in Clifford analysis. See \cite{Imaeda,Nono,XL2000,XL2002,XZL}. Due to the con-associativity one has to bracket terms in a particular way together. However, apart from this, some of the results still look very similar to the formulas derived for the Clifford algebra case at least at the first glance.  
 
  \par\medskip\par 
Therefore, an important question is whether there appear really essentially different features in the treatment with the more complicated non-associative  Cayley-Dickson algebras. In \cite{KO2018} and \cite{KO2019} the authors present some important structural differences. In contrast to the Clifford analysis setting, octonionic regular functions in the sense of the Riemann approach, called $\mathbb{O}$-regular for short, do not form a left or right module anymore. If $f$ is a left $\mathbb{O}$-regular function, then it is not guaranteed that also $f \lambda$ is left $\mathbb{O}$-regular again.  So, there is no one-to-one correspondence between the set of Clifford-holomorphic functions in $\mathbb{R}^8$ and $\mathbb{O}$-regular functions.  

On the one hand, the lack of this property represents an obstacle in the development of generalizations of many other theorems to the non-associative setting.  

On the other hand, there is the challenge to figure out problems that really require a treatment with non-associative Cayley-Dickson algebras that cannot be treated in the Clifford analysis setting. 
 \par\medskip\par 
The aim of this paper to describe one very concrete example. 
 \par\medskip\par 
Cayley-Dickson algebras ${\cal{C}}_k$ offer the possibility to consider lattices $\Omega \subset {\cal{C}}_k$ that admit non-trivial left and right ideals ${\cal{L}}$ and ${\cal{R}}$ such that ${\cal{L}}\Omega \subseteq \Omega$ resp. $\Omega {\cal{R}}  \subseteq \Omega$. These are natural generalizations of classical CM-lattices, playing an important role in the treatment of algebraic points of elliptic curves, cf. \cite{Lang}.

Particularly, in Cayley-Dickson algebras one can consider lattices which, apart from their algebraic structure of a module, are additionally closed under multiplication. See also \cite{AlbuquerqueKra2008} in which some basic properties of CM-lattices in some graded $\mathbb{R}_F \mathbb{Z}^n$-algebras with deformations have been discussed, in particular for $\mathbb{R}_F \mathbb{Z}^3$-algebras and Clifford algebras. 
\par\medskip\par 
In fact, one can consider CM-lattices in Clifford algebras. However, Clifford analysis is restricted to consider functions that are only defined in the subset of paravectors and not for variables from the full Clifford algebra (apart from the particular quaternionic case). 
  
 Note that the multiplication of two non-integer lattice paravectors from $\mathbb{R}^{n+1}$ however does not give anymore a lattice paravector from $\mathbb{R}^{n+1}$. It gives an element from the full Clifford algebra that involves bivector parts. Therefore, a Clifford holomorphic function evaluating CM-values of the form $f(\mu \omega)$ with non-real multiplicators $\mu \in {\cal{L}}$ and non-real lattice paravectors $\omega \in \Omega$ cannot be defined. In the framework of Clifford analysis, the argument in the function must be a paravector again. 
 In \cite{Kra2002} we introduced a two-sided kind of CM-multiplication, considering a simultaneous multiplication of the same multiplicator from the left and from the right to the lattice of the form $f(\eta \omega \eta)$. In fact whenever $\omega,\eta$ are paravectors, then $\eta \omega \eta$ actually are paravectors again. But this is not the case, if one considers the multiplicator $\eta$ just from one side or if one considers on both sides two different multiplicators $\eta$ and $\mu$.  
 \par\medskip\par 
 This obstacle can successfully be overcome using Cayley-Dickson algebras instead. 
In constrast to the Clifford algebra, in the octonions every non-zero element  algebra is invertible. 

More generally, in the context of Cayley-Dickson algebras, one can meaningfully define functions where the arguments may stem from subsets of the full Cayley-Dickson algebra and not only from the subset of paravectors. 

 \par\medskip\par 
 
 Now, the function theory in Cayley-Dickson algebras also admits the construction of generalized Weierstra{\ss} elliptic functions which satisfy the regularity criterion in the sense of the Riemann approach. Taking special care of the non-associativity, the construction of the regular analogue of the Weierstra{\ss} $\wp$-function can be performed in exactly the same way as in Clifford analysis (cf. \cite{Kra2004},  \cite{Ry82}), namely by periodizing the partial derivatives of the associated regular Cauchy kernel and adding some convergence preserving terms. This possibility was already  roughly outlined in \cite{Kra2002,Kra2004}.  
 Recently, a $7$-fold periodic generalization of the cotangent series was explicitly written out particularly for the octonionic case in \cite{Nolder2018}. This also  underlines the current interest in this topic. 
 
 \par\medskip\par
 
 However, the treatment with Cayley-Dickson algebras provides us with a new feature, if we consider these functions associated with particular Cayley-Dickson CM-lattices, since we do not have such an algebraic structure in Clifford analysis where we are restricted to define the functions on the space of paravectors.  
 
 At least in the octonionic cases we are able to deduce explicit algebraic relations between the different CM-division values of some prototypes of generalized elliptic functions. The canonical examples of CM-lattices in Cayley-Dickson algebras are lattices whose components stem from multi-quadratic number fields $\mathbb{Q}[\sqrt{m_1},\ldots\sqrt{m_k}]$. In the octonionic case we are particularly dealing with tri-quadratic number fields. This is also in accordance with the classical complex case in which we deal with imaginary quadratic number fields of the form $\mathbb{Q}{[ \sqrt{-m}]}$.  
  \par\medskip\par
  The paper is structured as follows. 
  
  In Section~2 we summarize the most important facts and notions about hypercomplex numbers in Cayley-Dickson algebras and recall their basic properties which are used in the sequel of this paper.  
  In Section~3 we look at integral domains and introduce lattices with Cayley-Dickson multiplications. We describe them in terms of generalized integrality conditions involving norms and trace expressions. We show that lattices of the form $\mathbb{Z} + \sum_{i=1}^{2^k-1}\mathbb{Z}_i \omega_i$ where the real components of the primitive periods stem from an algebraic field of the form $\mathbb{Q}[\sqrt{m_1},\ldots\sqrt{m_k}]$ and where the elements $m_1,\ldots,m_k$ are all mutually distinct positive square-free integers, serve as important non-trivial examples of lattices with Cayley-Dickson multiplication.  
  
  In Section~4 we give a short overview about which basic tools can be carried over from Clifford analysis to the non-associative setting and explain where we meet structural obstacles. While generalizations of the Weierstra{\ss} $\zeta$-function and $\wp$-function can even be introduced in general Cayley-Dickson algebras, a number of structural and technical features require at least an alternative or composition algebra. So we turn to focus on octonions in the sequel.    
  The core piece of the paper is Section~4.3 where we establish algebraic relations between  the values of the octonionic generalized Weierstra{\ss} $\zeta$-function at a point and their octonionic CM-division values. In particular, we present an explicit algebraic formula to calculate the trace of the octonionic division values of the generalized octonionic regular Weierstra{\ss} $\wp$-function. They turn out to be elements of the field generated by the algebraic elements of the lattice components and the components of the Legendre-constants which still require an algebraic investigation in the future. 
  
  An open and very interesting question remains to ask if these division values allow us to construct Galois field extensions of the related number fields, similarly like the complex division values of associated doubly periodic function $\frac{g_2g_3}{g_2^3-27g_3^2} \wp(z)$ lie in abelian Galois field extension of $\mathbb{Q}[\sqrt{-m}]$. As a matter of fact, the construction of these kinds of algebraic field extensions was a crucial motivation for R. Fueter to develop hypercomplex function theories, cf. \cite{Fueter1948-49,Malonek}.

 \section{Some basic properties of Cayley-Dickson algebras}
 We start by introducing the construction principle of Cayley-Dickson algebras. For details, we refer the reader for instance to \cite{AlbuquerqueKra2008,Baez,Imaeda,WarrenDSmith} and elsewhere.  
 
  They contain all normed real division algebras, i.e. the fields of real and complex numbers $\mathbb{R}$ and $\mathbb{C}$, the skew field of Hamiltonian quaternions $\mathbb{H}$ and the non-associative alternative octonions $\mathbb{O}$ as special cases. 
  
 Following \cite{WarrenDSmith}, one may start with a ring $R$ that has a two-sided multiplicative neutral element $1$ and a non-necessarily commutative and non-necessarily associative multiplication. Furthermore, we impose that it has a ``conjugation'' anti-automorphism $a \mapsto \overline{a}$ with the properties that $\overline{a+b} = \overline{a}+\overline{b}$, $\overline{ab} = \overline{b} \;\overline{a}$ and $\overline{\overline{a}} = a$ for all $a,b \in R$. Then one forms pairs of numbers of the form $(a,b)$ and $(c,d)$ and defines an addition and multiplication operation by  
 $$
 (a,b)+(c,d) :=(a+c,b+d),\quad\quad (a,b)\cdot (c,d) = (ac-d\overline{b},\overline{a}d+cb). 
 $$   
 The conjugation is extended by $\overline{(a,b)}=(\overline{a},-b)$. 
 
 The simplest choice is to take for the ring $R$ the real numbers $\mathbb{R}$ in which we have $\overline{a}=a$. The above indicated doubling process, called Cayley-Dickson doubling, then generates in the first step the complex number field $\mathbb{C}$. It is the first Cayley-Dickson algebra generated by the doubling process starting with $\mathbb{R}$. If we continue performing this doubling procedure, then we obtain a chain of non-necessarily commutative nor associative algebras which are the (classical) Cayley-Dickson algebras ${\cal{C}}_k$ where $k$ denotes the step of the doubling procedure. 
 The next Cayley-Dickson algebra ${\cal{C}}_2$ is the skew field of Hamiltonian quaternions where each element can be written in the form $z=x_0 + x_ 1e_1 + x_2 e_2 + x_3 e_3$. Here, $e_i^2=-1$ for $i=1,2,3$,  $e_1 e_2 = e_3$, $e_2 e_3 = e_1$, $e_3 e_1 = e_2$ and $e_i e_j = - e_j e_i$ for all distinct $i,j$ from $\{1,2,3\}$. This algebra is not commutative anymore, but it is still associative. It still makes part of the associative Clifford algebras. This is not anymore the case after having performed the following doubling step where we arrive at the octonions $\mathbb{O}$. Octonions have the form 
 $$
 z = x_0 + x_1 e_1 + x_2 e_2 + x_3 e_3 + x_4 e_4 + x_5 e_5 + x_6 e_6 + x_7 e_7
 $$
 where $e_4=e_1 e_2$, $e_5=e_1 e_3$, $e_6= e_2 e_3$ and $e_7 = e_4 e_3 = (e_1 e_2) e_3$. 
 Like in the quaternionic case, we have $e_i^2=-1$ for all $i =1,\ldots,7$ and $e_i e_j = -e_j e_i$ for all mutual distinct $i,j \in \{1,\ldots,7\}$. The multiplication is visualized in the following multiplication table
\begin{center}
 \begin{tabular}{|l|rrrrrrr|}
 $\cdot$ & $e_1$&  $e_2$ & $e_3$ & $e_4$ & $e_5$ & $e_6$  & $e_7$ \\ \hline
 $e_1$  &  $-1$ &  $e_4$ & $e_5$ & $-e_2$ &$-e_3$ & $-e_7$ & $e_6$ \\
 $e_2$ &  $-e_4$&   $-1$ & $e_6$ & $e_1$ & $e_7$ & $-e_3$ & $-e_5$ \\
 $e_3$ &  $-e_5$& $-e_6$ & $-1$  & $-e_7$&$e_1$  & $e_2$  & $e_4$ \\
 $e_4$ &  $e_2$ & $-e_1$ & $e_7$ & $-1$  &$-e_6$ & $e_5$  & $-e_3$\\
 $e_5$ &  $e_3$ & $-e_7$ & $-e_1$&  $e_6$&  $-1$ & $-e_4$ & $e_2$ \\
 $e_6$ &  $e_7$ &  $e_3$ & $-e_2$& $-e_5$& $e_4$ & $-1$   & $-e_1$ \\
 $e_7$ & $-e_6$ &  $e_5$ & $-e_4$& $e_3$ & $-e_2$& $e_1$  & $-1$ \\ \hline 	
 \end{tabular}
\end{center}

 As one can easily deduce with the help of this table, the octonions are not  associative anymore. Therefore, they are no Clifford algebras anymore. However, one still has a number of nice properties, stemming from the fact that the octonions still form an alternative composition algebra. 
 
In particular, one has the Moufang relations, guaranteeing that $(ab)(ca) = a((bc)a)$ for all $a,b,c \in \mathbb{O}$, which particularly for $c=1$ gives the flexibility condition $(ab)a= a(ba)$. Moreover, one has the important rule 
$$
(a\overline{b})b = \overline{b}(ba) =a(\overline{b}b)=a(b \overline{b})
$$  
for all $a,b \in \mathbb{O}$. 
 
All the first Cayley-Dickson algebras ${\cal{C}}_k$ with $k \le 3$ are division algebras. 

In the next step of the Cayley-Dickson doubling we then obtain the $16$-dimensional sedenions.  As mentioned in \cite{Imaeda} one has $e_j e_k = -\delta_{jk} + \varepsilon_{jkm} e_m$ where $\delta_{ij}$ is the usual Kronecker symbol and $\varepsilon_{jkm}$ is the usual epsilon tensor, which is totally antisymmetric in its indices given by the usual permutation rule for a $3$-indexed antisymmetric tensor with values from $\{0,1,-1\}$. 
 
 Up from here we have to deal with zero-divisors (which in the case of sedenions is a measure zero subset) and  one even loses the alternative multiplication structure. Artin's theorem only guarantees the power associativity for ${\cal{C}}_k$ with $k \ge 4$. Therefore, we also lose the general Moufang identities at this level.  
 
 But an important property remains that each element $z = x_0 + \sum\limits_{j=1}^{2^k-1} x_j e_j$ of a Cayley-Dickson algebra ${\cal{C}}_k$ satisfies a quadratic equation of the form  
 $$
 z^2 - {\cal{S}}(z) z + {\cal{N}}(z) = 0
 $$
 where ${\cal{S}}(z) = z + \overline{z} = 2 x_0$ is the {\em trace} and where ${\cal{N}}(z) = z \overline{z} =|z|^2 = \sum\limits_{i=0}^{2^k-1} x_i$ is the {\em  norm} of $z$, cf. \cite{WarrenDSmith}. 
 Note that in general $dim_{\mathbb{R}}{\cal{C}}_k=2^k$. The only real normed division algebras, where one has the composition property ${\cal{N}}(zw) = {\cal{N}}(z) {\cal{N}}(w)$ are $\mathbb{R}, \mathbb{C},\mathbb{H}$ and $\mathbb{O}$. Up from the sedenions it can happen that 
 ${\cal{N}}(zw) - {\cal{N}}(z) \cdot {\cal{N}}(w) \neq 0$, see \cite{Imaeda}. 
 
 To conclude this section, we want to mention that we can get a different chain of algebras, if we construct the doubling differently, as proposed for instance in \cite{WarrenDSmith}. In the framwork of a different doubling it is possible to maintain some of the nicer properties, such as the multiplicativity of the norm.  
  
 \section{Integrality conditions and lattices with Cayley-Dickson multiplication}
In this section we introduce some number theoretical concepts. We start with
\begin{definition}
An element $z$ from a Cayley-Dickson algebra ${\cal{C}}_k$ is called rational (resp. integral) if its trace ${\cal{S}}(z)$ and its norm ${\cal{N}}(z)$ is a rational number from $\mathbb{Q}$ (or an integer from $\mathbb{Z}$ respectively).    
\end{definition}
Rational (resp. integral) Cayley-Dickson numbers form a not necessarily commutative nor associative ring, if we have 
$$
{\cal{N}}(a+b),{\cal{N}}(a\cdot b), {\cal{S}}(a+b),{\cal{S}}(a\cdot b) \in \mathbb{Q}, (\;{\rm resp.} \in \mathbb{Z}). 
$$
In the quaternionic setting, such an algebra is often called a Brandt algebra, cf. \cite{AlbuquerqueKra2008,Fueter1948-49}. Like in the quaternions, also in the octonionic setting one can easily characterize multiplicative invariant rational and integral Brandt algebras in the following way:
\begin{proposition}
Two rational (resp. integral) octonions $a,b \in \mathbb{O}$ belong to a rational (integral) non-associative Brandt algebra if and only if $2 \langle a,b \rangle$ and $2 \langle a,\overline{b} \rangle$ are elements from $\mathbb{Q}$ (resp. from $\mathbb{Z})$, where $\langle \cdot, \cdot\rangle$ is the usual Euclidean scalar product in $\mathbb{R}^8$.
\end{proposition}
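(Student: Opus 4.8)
The plan is to express the four quantities that control the Brandt-algebra condition — namely ${\cal S}(a+b)$, ${\cal N}(a+b)$, ${\cal S}(ab)$ and ${\cal N}(ab)$ — in terms of the traces and norms of the single elements $a,b$ together with the two Euclidean scalar products $\langle a,b\rangle$ and $\langle a,\overline b\rangle$. Two of these four will turn out to be automatically rational (resp.\ integral) as soon as $a$ and $b$ are, while the other two are governed precisely by $2\langle a,b\rangle$ and $2\langle a,\overline b\rangle$; the claimed equivalence then follows.

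The computations rest on three elementary observations: that ${\cal S}(z)=z+\overline z=2\,\mathrm{Re}(z)$; that the multiplication rules $e_i^2=-1$, $e_ie_j=-e_je_i$ and $\overline{e_i}=-e_i$ give $\mathrm{Re}(a\overline b)=\langle a,b\rangle$ and $\mathrm{Re}(ab)=\langle a,\overline b\rangle$; and that, the conjugation being an anti-automorphism, $\overline{a\overline b}=b\overline a$, so that $a\overline b+b\overline a$ is real. With these in hand one obtains
$$
{\cal S}(a+b)={\cal S}(a)+{\cal S}(b),\qquad {\cal N}(a+b)=(a+b)\overline{(a+b)}={\cal N}(a)+{\cal N}(b)+2\langle a,b\rangle,
$$
$$
{\cal S}(ab)=ab+\overline b\,\overline a=2\,\mathrm{Re}(ab)=2\langle a,\overline b\rangle,\qquad {\cal N}(ab)={\cal N}(a)\,{\cal N}(b).
$$
The last identity is the composition property recalled in Section~2; this is the one place where the octonionic structure is used in an essential way, and it is the reason the statement already fails for the sedenions, where ${\cal N}(ab)$ need not factor.

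The forward implication is then immediate: if $a$ and $b$ lie in a common rational (integral) Brandt algebra, so do $a+b$ and $ab$, hence ${\cal N}(a+b)$ and ${\cal S}(ab)$ are rational (integral), and the displayed formulas turn this into $2\langle a,b\rangle,\,2\langle a,\overline b\rangle\in\mathbb{Q}$ (resp.\ $\in\mathbb{Z}$). For the converse I would exhibit a concrete Brandt algebra. Applying the quadratic relation $z^2-{\cal S}(z)z+{\cal N}(z)=0$ to $z=a+b$ gives $ba=-ab+{\cal S}(a)\,b+{\cal S}(b)\,a-2\langle a,b\rangle$ with rational (integral) coefficients, and together with the quadratic relations for $a$, $b$ and $ab$ this shows that the ring $A$ generated by $1,a,b$ — associative by Artin's theorem, the octonions being alternative — is the $\mathbb{Q}$-span (resp.\ $\mathbb{Z}$-span) of $\{1,a,b,ab\}$ and is closed under multiplication. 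Using $\overline a a={\cal N}(a)\cdot 1$ and the composition-algebra identities $\langle xy,z\rangle=\langle y,\overline x z\rangle=\langle x,z\overline y\rangle$ one gets $2\langle a,ab\rangle={\cal N}(a){\cal S}(b)$ and $2\langle b,ab\rangle={\cal N}(b){\cal S}(a)$; combining this with the formulas above, the norms ${\cal N}(v)$ and the doubled scalar products $2\langle v,w\rangle$ of all $v,w\in\{1,a,b,ab\}$ are rational (integral), so every element $x\in A$, being a $\mathbb{Q}$- (resp.\ $\mathbb{Z}$-)combination of these, has rational (integral) trace ${\cal S}(x)=2\langle x,1\rangle$ and norm ${\cal N}(x)=\langle x,x\rangle$. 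Hence $A$ is a rational (integral) Brandt algebra containing $a$ and $b$.

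The genuine content is therefore the pair of identities ${\cal N}(a+b)-{\cal N}(a)-{\cal N}(b)=2\langle a,b\rangle$ and ${\cal S}(ab)=2\langle a,\overline b\rangle$, together with the fact that multiplicativity of the octonionic norm removes ${\cal N}(ab)$ as a possible obstruction; this is also the point where the octonionic setting is genuinely needed. The step I expect to require the most care is the converse, where non-associativity makes it necessary to invoke Artin's theorem in order to speak of the subalgebra generated by $a$ and $b$ at all, and to be sure that rationality (integrality) propagates to all of its elements and not merely to $a+b$, $ab$ and $ba$; the reduction to the four-dimensional span $\{1,a,b,ab\}$, and the verification that its norms and doubled scalar products are rational (integral), are exactly what makes the argument go through. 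The integral case is verbatim the rational one with $\mathbb{Q}$ replaced by $\mathbb{Z}$.
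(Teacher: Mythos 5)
Your proof is correct and its computational core coincides exactly with the paper's: the paper's entire proof consists of the four identities $\mathcal{N}(ab)=\mathcal{N}(a)\mathcal{N}(b)$, $\mathcal{N}(a+b)=\mathcal{N}(a)+\mathcal{N}(b)+2\langle a,b\rangle$, $\mathcal{S}(ab)=2\langle a,\overline{b}\rangle$ and $\mathcal{S}(a+b)=\mathcal{S}(a)+\mathcal{S}(b)$, from which the equivalence is read off just as you describe. Where you go beyond the paper is the converse direction: the paper treats ``belonging to a Brandt algebra'' as synonymous with the four conditions on $a+b$ and $ab$ and stops there, whereas you actually exhibit the algebra, namely the span of $\{1,a,b,ab\}$, prove it is closed under multiplication via the quadratic relation $ab+ba=\mathcal{S}(a)b+\mathcal{S}(b)a-2\langle a,b\rangle$ and Artin's theorem, and check that rationality (integrality) of traces and norms propagates to every element through the Gram data $2\langle a,ab\rangle=\mathcal{N}(a)\mathcal{S}(b)$, $2\langle b,ab\rangle=\mathcal{N}(b)\mathcal{S}(a)$. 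This extra step is sound and is genuinely stronger than what is written in the paper; it buys a fully rigorous converse at the cost of invoking Artin's theorem and the composition-algebra adjointness identities, neither of which the paper needs for its minimal reading of the statement.
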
 
\begin{proof}
In the octonions one still has 
$$
{\cal{N}}(ab) = (ab)\cdot \overline{(ab)} =  ab (\overline{b}\overline{a})=a(b \overline{b})\overline{a} = {\cal{N}}(a) {\cal{N}}(b).
$$
Furthermore, $${\cal{N}}(a+b) = (a+b)\overline{(a+b)} = a \overline{a}+b \overline{b} + a \overline{b}+b\overline{a} = {\cal{N}}(a) + {\cal{N}}(b) + 2 \langle a,b \rangle.$$
Next, we have
$$
{\cal{S}}(ab) = ab + \overline{ab} = 2 \langle a,\overline{b}\rangle,
$$
and trivially ${\cal{S}}(a+b) = {\cal{S}}(a) + {\cal{S}}(b)$. 
\end{proof}
\begin{remark}
If the ring is additionally stable under conjugation, then the second condition $2 \langle a,\overline{b} \rangle \in \mathbb{Q}$ resp. ($\in \mathbb{Z}$) can be dropped. 
\end{remark}
 Next we introduce the concept of generalized complex multiplication of lattices in Cayley-Dickson algebras. 
 \begin{definition} (Lattices with Cayley-Dickson multiplication)\\
 	Let $\Omega_{2^k} = \mathbb{Z} \omega_0 + \mathbb{Z} \omega_1 + \cdots + \mathbb{Z} \omega_{2^k-1} $ be a $2^k$-dimensional lattice, where all elements $\omega_h$ ($h=0,\ldots2^{k-1}$) are $\mathbb{R}$-linearly independent elements from ${\cal{C}}_k$. Then we say that $\Omega_{2^k}$ has a left (right) ${\cal{C}}_{k}$-multiplication if there exists an $\eta \in \Omega_{2^k} \backslash\mathbb{Z}$ such that 
 	$$
 	\eta \cdot \Omega_{2^k} \subseteq \Omega_{2^k},\quad {\rm resp.}\quad   \Omega_{2^k} \cdot \eta \subseteq \Omega_{2^k}.
 	$$
 	In the case where we have $\eta \omega = \omega \eta$ for all $\eta,\omega \in \Omega_{2^k}$ we say that the lattice is closed under multiplication. 
 \end{definition}
 The (non-associative) ring of left multiplicators form a left (resp. right) ideal. Lattices with Cayley-Dickson multiplication can be constructed by choosing the primitive generators from a rational or integral Brandt algebra. 
 The most important examples are lattices whose components stem from multiquadratic number fields. The canonical examples can be constructed as follows. 
 
 Take $k$ mutually distinct square-free positive integers $m_1,\ldots,m_k$. 
 
 Then take a lattice of the form 
 $$
 \mathbb{Z} + \mathbb{Z} \omega_1 + \cdots + \mathbb{Z} \omega_k, \mathbb{Z} \omega_1 \omega_2 + \cdots + \mathbb{Z} \omega_{k-1} \omega_k + \cdots +  \mathbb{Z} (((\omega_1 \cdots \omega_{k-3})\omega_{k-2})\omega_{k-1})\cdot \omega_k
 $$
 where  
\begin{eqnarray*}
\omega_0 &:=& 1 \\
\omega_1 & :=& \alpha_1 \sqrt{m_1} e_1 \\
\vdots   & \vdots  & \vdots \\
\omega_k &:=& \alpha_k \sqrt{m_k} e_k \\
\omega_1 \cdot \omega_2 &:=& \alpha_1 \alpha_2 \sqrt{m_1 m_2} e_1 e_2 \\
\vdots   & \vdots  & \vdots \\
((\omega_1 \cdots )\omega_{k-1})\cdot \omega_k &:=& \alpha_1 \cdots \alpha_k \sqrt{m_1 \cdots m_k} ((e_1\cdots )e_{k-1})\cdot e_k 
\end{eqnarray*}
 and where one chooses all the appearing components $\alpha_{j_1\cdots j_r}$ to be rational numbers. 
 An arbitrary $\mathbb{Z}$ linear combination of these lattice elements then has the form 
 \begin{eqnarray*}
  & & \gamma_0  + \gamma_1 \omega_1 + \cdots + \gamma_k \omega_k +\cdots + \gamma_{1\cdots k} (\omega_1(\cdots))\omega_k \\ 
 &=& \gamma_0 + \gamma_1 \alpha_1 \sqrt{m_1} e_1 + \cdots + \gamma_k \alpha_k \sqrt{m_k}e_k + \cdots + \gamma_{1\cdots k} \alpha_{1\cdots k} \sqrt{m_1 \cdots m_k} ((e_1 \cdots)e_{k-1})e_{k}
 \end{eqnarray*} 
 where all $\gamma_{j_1\cdots j_r}$ are integers. As one easily may verify, the product of such two elements again gives an element of the same form, for instance 
 $$
 \omega_1 \cdot \omega_2 = \alpha_1 \alpha_2 \sqrt{m_1m_2} e_1 e_2.
 $$
 Note that in ${\cal{C}}_k$ one has that $(e_j e_k)e_m = \pm (e_m e_j) e_k$ which however means that the second structure constants are not anti-symmetric in general. In the case where all elements $\alpha_{j_1\cdots j_r}$ are integers, then one easily gets lattices being even closed under multiplication. As one may easily verify these lattices all form rational (resp. integral) Brandt algebras in the Cayley-Dickson algebra and they are stable under conjugation. The components of the primitive lattice generators are elements from the multiquadratic number field $\mathbb{Q}[\sqrt{m_1},\ldots,\sqrt{m_k}]$. In the particular complex case we are dealing with the classical CM-lattices of the form $\mathbb{Z} + \mathbb{Z}\tau$ where $\tau \in \mathbb{Q}[e_1\sqrt{m_1}]$. In the octonionic case we deal with tri-quadratic number fields. A bit more generally, consider eight $\mathbb{R}$ linearly independent octonionic lattice generators $\omega_h$ $(h=0,\ldots,7)$ where  
 \begin{eqnarray*}
 \omega_h &=& \alpha_{h_0} + \alpha_{h_1} \sqrt{m_1} e_1 + \alpha_{h_2} \sqrt{m_2} e_2 + \alpha_{h_3} \sqrt{m_3} e_3 \\& & + \alpha_{h_4} \sqrt{m_1m_2} e_4 + \alpha_{h_5} \sqrt{m_1m_3} e_5 + \alpha_{h_6}\sqrt{m_2m_3} e_6\\
 & &+ \alpha_{h_7} \sqrt{m_1m_2m_3} e_7,\quad\quad \alpha_{hj} \in \mathbb{Q} 
 \end{eqnarray*}
 It is easy to check that any product $\omega_h \omega_l$ turns out to be of the same form. 

For the sake of completeness, we introduce the notation $W=(\omega_{hl})_{hl}$ ($h,l \in \{0,\ldots,2^k-1\}$) for the matrix of the components of the lattice generators $\omega_h$ represented in the basis $\omega_h = \omega_{h,0} + \omega_{h,1} e_1 + \omega_{h,2} e_2 + \cdots + \omega_{h,2^k-1} (e_1(\cdots )e_{k-1})e_k$. Furthermore,  $\det(W)$ stands for its determinant and $\theta_{h,j}$ stands for the adjoint determinant associated with the elements $\omega_{h,j}$.  
  
 \section{Algebraic relations between the CM-division values of octonionic regular elliptic functions}

 \subsection{Cayley-Dickson regular functions and their basic properties}
 
 To make the paper self-contained we briefly summarize the basic facts on Cayley-Dickson regular functions in the sense of the Riemann approach and in particular on octonionic regular (monogenic) functions that are needed to prove the main results of this paper. Apart from this regularity concept, there is also the concept of slice-regularity in these algebras, cf. for instance \cite{GP}. However,  here we focus entirely on the following definition.
 \begin{definition} (Cayley-Dickson regularity) ({\rm cf. \cite{FlautSh,Imaeda}})\\
Let $U$ be an open subset in the Cayley-Dickson algebra ${\cal{C}}_k$. A function $f:U \to {\cal{C}}_k$ is called left (right) Cayley-Dickson regular, if ${\cal{D}} f(z)=0$ resp. $f(z){\cal{D}} = 0$ for all $z \in U$, where $${\cal{D}} := \frac{\partial }{\partial x_0} + \sum\limits_{j=1}^{2^k-1} \frac{\partial }{\partial x_j} e_j$$ is the generalized Cauchy-Riemann operator in the Cayley-Dickson algebra ${\cal{C}}_k$.   	
 \end{definition}
In the case $k=3$ we get the class of octonionic monogenic functions, discussed in \cite{GTBook,Nolder2018,Nono,XL2000,XL2002,XZL} which will be called $\mathbb{O}$-regular functions for short in all that follows. If $k=4$, then we deal with the sedenionc monogenic functions, see also \cite{Imaeda}. The general case has been addressed in \cite{FlautSh}. 
\par\medskip\par
All left and right Cayley-Dickson regular functions are also harmonic. They satisfy $\sum\limits_{j=0}^{2^k-1} \frac{\partial^2 f}{\partial x_i^2} = 0$. 

As important example of a function that is left and right Cayley-Dickson regular serves the generalized Cauchy kernel function $q_{\bf 0}(z) := \frac{\overline{z}}{|z|^{2^{k}}}$. Precisely speaking, it is left and right regular at any point $z\neq 0$. In the octonionic case one has $q_{\bf 0}(z) = \frac{\overline{z}}{|z|^8}$. As a direct consequence, also all partial derivatives  
$$
q_{\bf n}(z) := \frac{\partial^{|{\bf n}|}}{
	\partial x_1^{n_1} \cdots \partial x_{2^k-1}^{n_{2^k-1}}
} q_{\bf 0}(z),\quad\quad {\bf n}:=(n_1,\ldots,n_{2^k-1}),\;|{\bf n}| = \sum\limits_{j=1}^{2^k-1} n_j
$$ 
are left and right ${\cal{C}}_k$-regular at all points with $z \neq 0$.

 Following K. Imaeda, in the algebras ${\cal{C}}_k$ up from $k > 3$ one cannot set up a general Cauchy integral formula with this kernel function anymore, because the second structure constants are not anti-symmetric in these cases, cf. \cite{Imaeda}.
 
 This represents a serious obstacle. This might be a possible reason why there has not been spent that much effort to develop a comprehensive generalized function theory in the algebras ${\cal{C}}_k$ with $k > 3$ so far. 

 However, in the octonionic case, one still gets a Cauchy integral formula. Nevertheless, notice that in contrast to the Clifford analysis setting, one has to be careful with how to bracket the expressions together. From \cite{XL2000} and elsewhere we may recall
 
 \begin{theorem}(octonionic Cauchy-integral formula)\\
Let $U \subseteq \mathbb{O}$ be open and $K \subset U$ be a $7$-dimensional compact set with an orientable strongly Lipschitz boundary $\partial K$. Let $f:U \to \mathbb{O}$ be a left (right) $\mathbb{O}$-regular function. Then,
$$
f(z) = \frac{1}{\omega_{8}} \int\limits_{\partial K} q_{\bf 0}(z-w) \cdot \Bigg( d\sigma(w) f(w)\Bigg),\quad {\rm resp.}\quad f(z) = \frac{1}{\omega_{8}} \int\limits_{\partial K} \Bigg(f(w) d\sigma(w)\Bigg) \cdot q_{\bf 0}(z-w)
$$        	
where $\omega_{8} = \frac{\pi^4}{3}$ is the surface measure of the unit hypersphere in $\mathbb{O}$. 
 \end{theorem}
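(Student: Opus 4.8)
The plan is to derive the formula from an octonionic Borel--Pompeiu representation, obtained from a Stokes-type theorem for $\mathbb{O}$-valued differential forms together with an excision of the single point at which the Cauchy kernel is singular. First I would fix the octonionic surface element $d\sigma(w)=\sum_{k=0}^{7}(-1)^{k}e_{k}\,\widehat{dw_{k}}$ (with $e_{0}:=1$ and $\widehat{dw_{k}}$ the $7$-form obtained from $dw_{0}\wedge\cdots\wedge dw_{7}$ by deleting $dw_{k}$) and the volume form $dV=dw_{0}\wedge\cdots\wedge dw_{7}$, with conventions chosen so that the Gauss/Stokes theorem reads $d\bigl(d\sigma(w)\,f(w)\bigr)=(\mathcal{D}f)(w)\,dV(w)$ for every $C^{1}$ function $f$. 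Since only a single octonionic product occurs here, there is no bracketing question at this stage, and the right-hand side vanishes identically when $f$ is left $\mathbb{O}$-regular.

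The second, and central, step is to differentiate the $7$-form $q_{\mathbf 0}(z-w)\cdot\bigl(d\sigma(w)\,f(w)\bigr)$. Because octonionic multiplication is $\mathbb{R}$-bilinear, the Leibniz rule holds componentwise and gives
$$
d\Bigl(q_{\mathbf 0}(z-w)\cdot\bigl(d\sigma(w)\,f(w)\bigr)\Bigr)=dq_{\mathbf 0}(z-w)\wedge\bigl(d\sigma(w)\,f(w)\bigr)+q_{\mathbf 0}(z-w)\cdot\bigl((\mathcal{D}f)(w)\,dV(w)\bigr),
$$
and the last summand is $0$ by the regularity of $f$. Writing $\zeta:=z-w$ and collecting the coefficient of $dV$, the remaining term equals, up to an orientation sign, $-\bigl(\sum_{k=0}^{7}(\partial_{k}q_{\mathbf 0})(\zeta)\,(e_{k}f(w))\bigr)dV$, and here the key identity is
$$
\sum_{k=0}^{7}(\partial_{k}q_{\mathbf 0})(\zeta)\,(e_{k}f(w))=\Bigl(\sum_{k=0}^{7}(\partial_{k}q_{\mathbf 0})(\zeta)\,e_{k}\Bigr)f(w)+\sum_{k=1}^{7}\bigl[(\partial_{k}q_{\mathbf 0})(\zeta),\,e_{k},\,f(w)\bigr],
$$
where $[a,b,c]:=a(bc)-(ab)c$ denotes the octonionic associator. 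The first term on the right vanishes because $q_{\mathbf 0}$ is \emph{both} left and right $\mathbb{O}$-regular away from the origin, so $q_{\mathbf 0}\mathcal{D}=0$. The associator sum is the point where non-associativity genuinely enters and where the chosen bracketing pays off: substituting $(\partial_{k}q_{\mathbf 0})(\zeta)=-e_{k}|\zeta|^{-8}-8\zeta_{k}\overline{\zeta}\,|\zeta|^{-10}$ (for $k\ge1$), using the alternative law $[e_{k},e_{k},f]=0$ together with the $\mathbb{R}$-linearity of the associator in each slot, the whole sum collapses to $-8|\zeta|^{-10}\,[\overline{\zeta},\zeta,f(w)]$; and by the identity $\overline{b}(ba)=a(\overline{b}b)$ recalled in Section~2, applied with $b=\zeta$ and $a=f(w)$, one gets $\overline{\zeta}(\zeta f(w))=f(w)|\zeta|^{2}$, hence $[\overline{\zeta},\zeta,f(w)]=0$. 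Thus the chosen bracketing makes $q_{\mathbf 0}(z-w)\cdot\bigl(d\sigma(w)\,f(w)\bigr)$ a closed form on $K\setminus\{z\}$.

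Applying Stokes' theorem on $K_{\varepsilon}:=K\setminus\overline{B_{\varepsilon}(z)}$ (legitimate since $\partial K$ is strongly Lipschitz) then yields
$$
\int_{\partial K}q_{\mathbf 0}(z-w)\cdot\bigl(d\sigma(w)\,f(w)\bigr)=\int_{\partial B_{\varepsilon}(z)}q_{\mathbf 0}(z-w)\cdot\bigl(d\sigma(w)\,f(w)\bigr)
$$
for all sufficiently small $\varepsilon>0$. Finally I would evaluate the right-hand side as $\varepsilon\to0$: parametrising $\partial B_{\varepsilon}(z)$ by $w=z+\varepsilon\xi$, $\xi\in S^{7}$, one has $q_{\mathbf 0}(z-w)=-\overline{\xi}\,\varepsilon^{-7}$ and $d\sigma(w)=\varepsilon^{7}\xi\,d\Sigma(\xi)$ up to orientation, so the integrand becomes $-\overline{\xi}\bigl(\xi f(z+\varepsilon\xi)\bigr)d\Sigma(\xi)=-f(z+\varepsilon\xi)\,d\Sigma(\xi)$, again by $\overline{\xi}(\xi f)=f(\overline{\xi}\xi)=f$. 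Continuity of $f$ then gives $\int_{\partial B_{\varepsilon}(z)}(\cdots)\to-\omega_{8}f(z)$, and after the orientation bookkeeping this is exactly $f(z)=\frac{1}{\omega_{8}}\int_{\partial K}q_{\mathbf 0}(z-w)\cdot(d\sigma(w)f(w))$; here $\omega_{8}=2\pi^{4}/\Gamma(4)=\pi^{4}/3$ is the standard surface area of $S^{7}$. The right $\mathbb{O}$-regular statement follows by mirroring every product. I expect the only genuinely delicate point to be the one highlighted above — arranging the non-associative exterior-derivative computation so that the associator remainder cancels — which is precisely what forces the bracketing $q_{\mathbf 0}\cdot(d\sigma\,f)$ and which relies essentially on alternativity, explaining why the same argument fails for the Cayley--Dickson algebras $\mathcal{C}_{k}$ with $k>3$; everything else (Stokes on Lipschitz domains, the sign conventions, the value of $\omega_{8}$) is routine.
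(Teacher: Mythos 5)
The paper does not actually prove this theorem---it recalls it from Li--Peng \cite{XL2000,XL2002}---and your Borel--Pompeiu/Stokes argument is precisely the standard proof given there: closedness of the $7$-form $q_{\bf 0}(z-w)\cdot\bigl(d\sigma(w)f(w)\bigr)$ via the right-regularity of $q_{\bf 0}$ plus the vanishing associator $[\overline{\zeta},\zeta,f]=0$ supplied by alternativity, followed by excision of a small ball; all the key identities you use check out. The one point you defer as ``orientation bookkeeping'' is real but harmless: with the outward orientation your small-sphere computation yields $-\omega_8 f(z)$ for the kernel $q_{\bf 0}(z-w)$ and $+\omega_8 f(z)$ for $q_{\bf 0}(w-z)$, so the displayed formula is consistent only after reading the kernel argument as $w-z$ (or absorbing the sign into the convention for $d\sigma$), which is a convention mismatch in the paper's statement rather than a gap in your proof.
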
 

Following \cite{Imaeda}, even in the non-alternative cases, every function $f:U \to {\cal{C}}_k$ that is left (right) ${\cal{C}}_k$-regular in an open neighborhood around a point $a \in U$ can locally be expanded in a Taylor series of the form    
$$
f(z) = \sum\limits_{|{\bf n}|=0}^{+\infty} V_{\bf n}(z-a) a_{\bf n}, \quad {\rm resp.} \quad f(z) = \sum\limits_{|{\bf n}|=0}^{+\infty} a_{\bf n} V_{\bf n}(z-a), 
$$
where $a_{\bf n} := \frac{\partial^{|{\bf n}|}}{\partial {\bf x}^{\bf n}} f(a)$ are hypercomplex numbers from ${\cal{C}}_k$ and the polynomials $V_{\bf n}(z)$ are the generalized Fueter polynomials. This is a consequence of the power associativity that remains valid in all Cayley-Dickson algebras. 

In the Cayley-Dickson algebra setting, the Fueter polynomials have the form 
$$
V_{\bf n}(z) = \frac{1}{|{\bf n}|!} \sum\limits_{\pi perm({\bf n})} (Z_{\pi(n_1)}(Z_{\pi}(n_2)( \cdots (Z_{\pi(n_{2^k-1})} Z_{\pi(n_{2^k})})\cdots))).
$$
Here, $perm({\bf n})$ denotes the set of all distinguishable permutations of the sequence $(n_1,n_2,\ldots,n_{2^k-1})$ and $Z_i := V_{\tau(i)}(z) := x_i - x_0 e_i$ for all $i=1,\ldots,2^{k}-1$, see \cite{XL2002} Theorem C p.208, where the octonionic case has been treated in specifically. 

Like in the complex and Clifford analysis setting, Cauchy's integral formula allows us easily to show the particular octonionic case the following 
\begin{theorem} (octonionic Liouville's theorem)\\
	If $f: \mathbb{O} \to \mathbb{O}$ is left or right $\mathbb{O}$-regular and bounded over the whole algebra $\mathbb{O}$, then $f$ must be a constant.  
\end{theorem}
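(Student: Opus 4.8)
The plan is to reproduce the classical argument from complex and Clifford analysis, now based on the octonionic Cauchy integral formula stated above. The only subtlety caused by the non-associativity is that the bracketing of the Cauchy kernel must be left untouched; beyond that, the estimates go through because the octonionic norm is still multiplicative. I carry out the left $\mathbb{O}$-regular case; the right case is identical after mirroring the formula.

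First, I fix two arbitrary points $a,b\in\mathbb{O}$ and choose $R>0$ so large that $a$ and $b$ both lie in the open ball $B_R(0)$. Applying the octonionic Cauchy integral formula with $K=\overline{B_R(0)}$ once at $z=a$ and once at $z=b$, subtracting the two identities, and using the left distributivity valid in every Cayley-Dickson algebra, I obtain
$$
f(a)-f(b)=\frac{1}{\omega_8}\int_{\partial B_R(0)}\Bigl(q_{\bf 0}(a-w)-q_{\bf 0}(b-w)\Bigr)\cdot\Bigl(d\sigma(w)\,f(w)\Bigr).
$$
Second, I estimate the integrand. On $\partial B_R(0)$ one has $|w|=R$, so $|\xi-w|\ge R-\max\{|a|,|b|\}$ for every point $\xi$ on the segment joining $a$ and $b$. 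Since $q_{\bf 0}(z)=\overline{z}/|z|^8$ is smooth away from the origin and all its first-order partial derivatives decay like $|z|^{-8}$, the mean value inequality gives $|q_{\bf 0}(a-w)-q_{\bf 0}(b-w)|\le C_1|a-b|\,(R-\max\{|a|,|b|\})^{-8}\le C_2|a-b|\,R^{-8}$ once $R$ is large. Writing $M:=\sup_{\mathbb{O}}|f|<\infty$ and invoking the multiplicativity of the octonionic norm, $|xy|=|x|\,|y|$, the integrand has pointwise norm at most $C_2 M|a-b|R^{-8}$ times the scalar surface element.

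Third, I assemble the pieces. Since the surface measure of $\partial B_R(0)$ is $\omega_8 R^7$, it follows that
$$
|f(a)-f(b)|\le\frac{1}{\omega_8}\,C_2 M|a-b|R^{-8}\cdot\omega_8 R^7=\frac{C_2 M|a-b|}{R}\longrightarrow 0\qquad(R\to\infty).
$$
Hence $f(a)=f(b)$ for all $a,b\in\mathbb{O}$, i.e.\ $f$ is constant. (Alternatively, since each real component of an $\mathbb{O}$-regular function is harmonic on $\mathbb{R}^8$ and here bounded, one could also invoke the classical Liouville theorem for harmonic functions directly.)

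The step I expect to be the real point is the estimate of the integrand: in the non-associative setting we may not re-bracket $q_{\bf 0}(a-w)\cdot(d\sigma(w)f(w))$, so one must verify that $|q_{\bf 0}(a-w)\cdot(d\sigma(w)f(w))|=|q_{\bf 0}(a-w)|\,|d\sigma(w)|\,|f(w)|$ still holds. This is precisely where the composition property ${\cal{N}}(xy)={\cal{N}}(x){\cal{N}}(y)$ of $\mathbb{O}$ — the same property used above in the proof of the characterization of Brandt algebras — is indispensable, and it is the reason this argument does not extend verbatim to ${\cal{C}}_k$ with $k\ge 4$.
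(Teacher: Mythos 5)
Your proof is correct and takes essentially the same route as the paper: both rest on the octonionic Cauchy integral formula over a large sphere together with the decay of the kernel, the paper bounding the partial derivatives $\frac{\partial f}{\partial x_i}$ via the differentiated kernel $q_{\tau(i)}$ and letting the radius tend to infinity, while you bound the two-point difference $f(a)-f(b)$ by a mean-value estimate on $q_{\bf 0}$ itself. The only quibble is your closing diagnosis: the paper attributes the failure for ${\cal{C}}_k$ with $k\ge 4$ primarily to the unavailability of the Cauchy integral formula itself (the second structure constants are no longer antisymmetric there), not to the loss of norm multiplicativity, which in your estimate is only needed as an upper bound and could be replaced by any submultiplicative inequality.
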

\begin{proof} We describe the left $\mathbb{O}$-regular case. 
By performing partial differentiation on the octonionic Cauchy's integral formula, one directly obtains that 
$$
\frac{\partial }{\partial x_i} f(z) = \frac{1}{\omega_{8}} \int\limits_{|z-w|=r} q_{\tau(i)}(z-w)\cdot \Big(d\sigma(w) \cdot f(w)\Big)
$$
Thus, in view of  $|q_{\tau(i)}(z-w)| \le M |z-w|^{-8}$ (with a real constant $M$) and since the measure of the surface of a $8$-dimensional ball of radius $r$ is  $\frac{\pi^{4} r^{7}}{3}$, we have  
$$
\Big|\frac{\partial }{\partial x_i} f(z)\Big| \le \frac{M}{r} \sup_{z \in \mathbb{O}}\{|f(z)|\}
$$
which tends to zero, because $\sup_{z \in \mathbb{O}}\{|f(z)|\}$ is bounded. Hence, $f$ must be constant. 
\end{proof}
Note that we have used the Cauchy integral formula which is not available for higher dimensional Cayley-Dickson algebras. Therefore, this simple proof cannot be extended directly to the more general algebras ${\cal{C}}_k$ for $k>3$.  
\par\medskip\par
As a simple consequence on can also establish that even every function $f:\mathbb{O} \to \mathbb{O}$ that is harmonic and bounded  over the whole algebra $\mathbb{O}$ is a constant.
\par\medskip\par
 In particular, for the octonionic case one may also introduce in view of the property of being a composition algebra: 
 \begin{definition} (Octonionic meromorphicity)\\
 	Let $U \subseteq \mathbb{O}$ be an open set. Suppose that $a \in U$ and that $f:U \backslash\{a\} \to \mathbb{O}$ is left (right) $\mathbb{O}$-regular. Then the point $a$ is called a non-essential isolated singularity of $f$, if there exists a non-negative integer $n$ such that $|a|^n |f(z)|$ remains bounded in a  neighborhood around $a$. More generally, let $S \subset U$ be a closed subset with an orientable boundary. If $f:U \backslash S \to \mathbb{O}$ is left (right) $\mathbb{O}$-regular, then we say that $S$ is a non-essential singular set of $f$, if there exists a non-negative integer $n$ such that the expression $\rho^n |f(z)|$ remains bounded where $\rho :=\sup_{s \in S}\{|s-z|\}$. Left (right) $\mathbb{O}$-regular functions that have at most unessential singular points in a closed subset $S \subset \mathbb{O}$ are called left (right) $\mathbb{O}$-meromorphic.        
 \end{definition}

 \subsection{Basic properties of octonionic regular elliptic functions}
 
 In this subsection we briefly summarize the most basic properties of octonionic regular elliptic functions. We start by giving its definition.
 \begin{definition}
 	Let $\Omega_8 = \mathbb{Z} \omega_0 + \ldots + \mathbb{Z} \omega_7$ be an arbitrary eight-dimensional octonionic lattice; that means that $\omega_0,\ldots,\omega_7$ are supposed to be eight $\mathbb{R}$-linearly  independent octonions. Further, let $S \subset \mathbb{O}$ be a closed subset. 
 	A left (right) $\mathbb{O}$-regular function $f:\mathbb{O} \backslash S \to \mathbb{O}$ that has atmost unessential singularities at the points of $S$  	
 	 is called a left (right) $\mathbb{O}$-regular {\em elliptic} function, if it satisfies at each $z \in \mathbb{O} \backslash S$ that $f(z+\omega) = f(z)$ for all $\omega \in \Omega_8$ and $S+\omega = S$ for all $\omega \in \Omega_8$.  
 \end{definition}
 This is the same definition as given for the case of quaternions in \cite{Fueter1948-49} and for the case of paravector-valued functions with values in associative Clifford algebras in \cite{Kra2004}. 
 \par\medskip\par
 \begin{remark}
A crucial difference to the Clifford analysis setting consists in the fact that the  set of left (right) $\mathbb{O}$-regular elliptic functions is not a right (left) non-associative $\mathbb{O}$-module, see {\rm \cite{KO2019} Remark 4.4}. But one still has the property that left (right) $\mathbb{O}$-regularity is inherited by partial derivation of such a function.  
\end{remark}
 
 Like in the complex and Clifford-holomorphic case (cf. \cite{Kra2004,Ry82}) it is not possible to find any non-constant $\Omega_8$-periodic function that is left or right $\mathbb{O}$-regular on the whole algebra $\mathbb{O}$. This is due to the fact that the topological quotient $\mathbb{O}/\Omega_8$ is a compact $8$-torus. However, any function that is $\mathbb{O}$-regular on the whole algebra $\mathbb{O}$ is continuous in particular. Hence, it is bounded on the topological quotient torus, which means that it is bounded on the closure of each period cell. Therefore, as a consequence of the generalized octonionic Liouville theorem, such a function must be a constant. The same holds under the weaker condition of being harmonic.  
 \par\medskip\par
 Therefore, a non-constant $\mathbb{O}$-regular or harmonic $\Omega_8$-periodic function must have singularities. 
  \par\medskip\par
 The simplest non-trivial examples of $\mathbb{O}$-regular functions  are given in terms of the $\Omega_8$-periodization of the partial derivatives of the octonionic Cauchy kernel function $q_{\bf 0}(z)$. Applying the classical Eisenstein series convergence argument, the series 
 $$
\wp_{\bf n}(z) := \sum\limits_{\omega \in \Omega_8} \Big[\Big(\frac{\partial^{|{\bf n}|} }{\partial {\bf x}^{\bf n}} q_{\bf 0} \Big)(z + \omega)\Big]
 $$
 converge normally whenever $|{\bf n}| :=\sum_{j=1}^7 n_j \ge 2$, since $\sum\limits_{\omega \in \Omega_8 \backslash\{0\}} |\omega|^{-7+\alpha}$ is convergent if and only if $\alpha > 1$. In the limit case where ${\bf n}=\tau(i)$ is a multi-index of length $1$ (where $n_j = \delta_{ij}$ for one particular $i \in \{1,\ldots,7\}$) the series $\sum_{\omega \in \Omega_8} q_{\tau(i)}(z+\omega)$ where $q_{\tau(i)}(z)= \frac{\partial }{\partial x_i} q_{\bf 0}(z)$ is not convergent anymore. However, as usually, convergence can be achieved by adding a convergence preserving term in the way 
 $$
 \wp_{\tau(i)}(z) = q_{\tau(i)}(z) + \sum\limits_{\omega \in \Omega_8 \backslash\{0\}} \Big(q_{\tau(i)}(z+\omega)-q_{\tau(i)}(\omega)\Big),
 $$
 similarly to the Clifford algebra case, cf. \cite{Fueter1948-49,Kra2004,Ry82}.

In fact, in complete analogy to the Clifford analysis setting, the series  $\wp_{\tau(i)}(z)$ is $\Omega_8$-periodic. 

If we consider the subseries summing only over the lattice points of a seven-dimensional sublattice $\Omega_7$ with ${\cal{S}}(\Omega_7) = 0$, then we  deal with a partial derivative of the octonionic generalized cotangent function from \cite{Nolder2018}.

The left  $\mathbb{O}$-regular primitive of the fully $\Omega_8$-periodic function $\wp_{\tau(i)}$ given by 
$$
\zeta(z) := q_{\bf 0}(z) + \sum\limits_{\omega \in \Omega_8 \backslash\{0\}}
\Big(
q_{\bf 0}(z+\omega) - q_{\bf 0}(\omega) + \sum\limits_{j=1}^7 V_{\tau(i)}(z) q_{\tau(i)}(\omega)  
\Big)
$$
provides us with the direct analogue of the Weierstra{\ss} $\zeta$-function, satisfying like in the Clifford case $\frac{\partial \zeta}{\partial x_i} = \wp_{\tau(i)}$. $\zeta(z)$ is not $\Omega_8$-periodic anymore. However, it is quasi $\Omega_8$-periodic of the  form $$
\zeta(z+\omega_h) - \zeta(z) = \eta_h,
$$
where $\eta_h$ are octonionic constants, the so-called octononic Legendre constants. In the latter equation $\omega_h$ ($h=0,\ldots,7$) represent the primitive periods of $\Omega_8$. The Legendre constants are given by 
$$
\eta_h = \zeta(-\frac{\omega_h}{2}+\omega_h) - \zeta(-\frac{\omega_h}{2}) = 2 \zeta(\frac{\omega_h}{2}),
$$ 
because $\zeta$ is an odd function which can readily be seen by a rearrangement of the series. 

\begin{remark}
The left $\mathbb{O}$-regularity of $\zeta(z)$ follows from the application of Weierstra{\ss} convergence theorem from {\rm \cite{XL2002}~Theorem~11} to each particular term of the series. Note that 
\begin{eqnarray*}
{\cal{D}}[V_{\tau(j)(z) q_{\tau(i)}(\omega)}] &= & \frac{\partial }{\partial x_0} \Big(x_j-x_0e_j\Big) q_{\tau(i)}(\omega) + e_j \Big[\frac{\partial }{\partial x_j} 
\Big(x_j-x_0e_j\Big) q_{\tau(i)}(\omega) \Big]\\
& & + \sum\limits_{i \neq 0,j} e_j \Big[\frac{\partial }{\partial x_i} 
\Big(x_j-x_0e_j\Big) q_{\tau(i)}(\omega)\Big] \\
& = & -e_j q_{\tau(i)}(\omega) + e_j q_{\tau(i)}(\omega) = 0.
\end{eqnarray*}
The left and right $\mathbb{O}$-regularity of the functions $\wp_{\tau(i)}$ and $\wp_{\bf n}$ with $|{\bf n}| \ge 2$ is evident.	
\end{remark}

The detailed convergence proof follows along the same lines as in the Clifford case. Hence, we omit it. See \cite{Kra2004} and see also \cite{Nolder2018} for the particular octonionic cotangent series constructions treated there. 

\begin{remark}
The same series constructions can also be made in the general $2^k$-dimensional Cayley-Dickson algebras by inserting the functions $q_{\bf 0}(z) := \frac{\overline{z}}{|z|^{2^k}}$ or their partial derivatives, respectively, in the series constructions. The convergence conditions remain the same. However, notice that we do not have an analogue of Liouville's theorem in the more general setting due to the lack of a direct analogue of Cauchy's integral formula. Anyway, the Taylor expansion representation of the Cauchy kernel function and its partial derivatives which are required in the convergence proof remain valid. 
\end{remark}    
 
 \subsection{Octonionic regular elliptic functions for generalized CM-lattices and their division values}
 
 The results presented in this subsection only address the octonionic setting, because the proofs explicitly use the alternative property of the octonions which is lost in the higher dimensional Cayley-Dickson algebras ${\cal{C}}_k$ with $k > 3$. 
 \par\medskip\par   
 Before we start, we need to establish some important preparatory statements. 
 
 First we note that 
 \begin{eqnarray*}
 	q_{\bf 0}(\lambda \cdot (z\mu) ) &=& \frac{\overline{\lambda \cdot (z \mu)}}{|\lambda \cdot (z \mu)|^8} = \frac{\overline{z\mu}\cdot\overline{\lambda}}{|\lambda|^8 |z \mu|^8}\\
 	&=& \frac{(\overline{\mu} \;\overline{z})\cdot \overline{\lambda}}{|\mu|^8 |z|^8 |\lambda|^8} = \Big(q_{\bf 0}(\mu) q_{\bf 0}(z) \Big)\cdot q_{\bf 0}(\lambda)	
 \end{eqnarray*} 
 where we used the conjugation property $\overline{ab} = \overline{b}\overline{a}$. 
 
 Now we want to show the following formula
 \begin{proposition}\label{Prp49}
 	For each $\lambda,\mu, z \in \mathbb{O} \backslash\{0\}$ we have 
 	\begin{equation} \label{q0trafo}
 	\Big(\mu\cdot q_{\bf 0}(\lambda \cdot (z \mu)) \Big)\cdot \lambda = \frac{1}{({\cal{N}}(\mu) {\cal{N}}(\lambda))^3 }q_{\bf 0}(z).
 	\end{equation}
 \end{proposition}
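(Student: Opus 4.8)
The plan is to deduce the identity from the transformation rule for $q_{\bf 0}$ recorded just before the statement, namely $q_{\bf 0}(\lambda\cdot(z\mu))=\big(q_{\bf 0}(\mu)\,q_{\bf 0}(z)\big)\cdot q_{\bf 0}(\lambda)$, and then to ``absorb'' the outer factor $\mu$ into $q_{\bf 0}(\mu)$ and the outer factor $\lambda$ into $q_{\bf 0}(\lambda)$. What makes this work is that, since $q_{\bf 0}(\mu)=\overline{\mu}/{\cal N}(\mu)^4$ and $q_{\bf 0}(\lambda)=\overline{\lambda}/{\cal N}(\lambda)^4$, the products
$$\mu\cdot q_{\bf 0}(\mu)=\frac{\mu\overline{\mu}}{{\cal N}(\mu)^4}=\frac{1}{{\cal N}(\mu)^3},\qquad q_{\bf 0}(\lambda)\cdot\lambda=\frac{\overline{\lambda}\lambda}{{\cal N}(\lambda)^4}=\frac{1}{{\cal N}(\lambda)^3}$$
are real numbers, hence lie in the centre of $\mathbb{O}$ and may be pulled out of any bracket once they have been formed.

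First I would substitute the above factorisation into the left-hand side (with the triple product bracketed as $\mu\cdot\big(q_{\bf 0}(\lambda\cdot(z\mu))\cdot\lambda\big)$, which is the bracketing that makes the two absorptions legitimate), obtaining $\mu\cdot\big(\big[(q_{\bf 0}(\mu)\,q_{\bf 0}(z))\,q_{\bf 0}(\lambda)\big]\cdot\lambda\big)$. Applying the rule $(a\overline{b})b=a(b\overline{b})$ from Section~2 with $a:=q_{\bf 0}(\mu)\,q_{\bf 0}(z)$ and $b:=\lambda$ (note that $q_{\bf 0}(\lambda)$ is a real multiple of $\overline{\lambda}$) turns the inner bracket into $(q_{\bf 0}(\mu)\,q_{\bf 0}(z))\cdot\big(q_{\bf 0}(\lambda)\,\lambda\big)={\cal N}(\lambda)^{-3}\,q_{\bf 0}(\mu)\,q_{\bf 0}(z)$. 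Pulling out the scalar and using the companion rule $\overline{b}(ba)=a(b\overline{b})$ with $b:=\overline{\mu}$ and $a:=q_{\bf 0}(z)$ gives $\mu\cdot\big(q_{\bf 0}(\mu)\,q_{\bf 0}(z)\big)=\big(\mu\,q_{\bf 0}(\mu)\big)\,q_{\bf 0}(z)={\cal N}(\mu)^{-3}\,q_{\bf 0}(z)$, and collecting the two scalar factors yields exactly $\frac{1}{({\cal N}(\mu){\cal N}(\lambda))^3}\,q_{\bf 0}(z)$. As an independent check one can write $q_{\bf 0}(w)=w^{-1}/{\cal N}(w)^3$; using multiplicativity of the norm the claim reduces to $\mu\cdot\big((\lambda(z\mu))^{-1}\cdot\lambda\big)=z^{-1}$, which follows from $(\lambda(z\mu))^{-1}=(\mu^{-1}z^{-1})\lambda^{-1}$ together with the inverse-property identities $(c\lambda^{-1})\lambda=c$ and $\mu(\mu^{-1}c)=c$ valid in an alternative algebra.

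The main obstacle is exactly the non-associativity: each of the regroupings above must be recognised as an instance of flexibility, of the alternative laws, or of the Moufang-type identity $(a\overline{b})b=a(\overline{b}b)=a(b\overline{b})$ from Section~2, i.e.\ one must ensure that in every step at most two genuinely non-commuting, non-real elements have to be associated at once (or that one of the three is a real scalar). Keeping track of the correct bracketing of the triple product on the left-hand side is the delicate bookkeeping point. This is also precisely where the argument uses that $\mathbb{O}$ is an alternative composition algebra: in ${\cal C}_k$ with $k\ge 4$ the norm is no longer multiplicative and the Moufang identities fail, so neither the factorisation of $q_{\bf 0}(\lambda(z\mu))$ nor these absorptions survive, which explains why all of Section~4.3 is confined to the octonionic case.
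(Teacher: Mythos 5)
Your two absorption steps are fine: the factorisation $q_{\bf 0}(\lambda\cdot(z\mu))=\big(q_{\bf 0}(\mu)\,q_{\bf 0}(z)\big)\cdot q_{\bf 0}(\lambda)$, the rule $(a\overline{b})b=a(\overline{b}b)$ to absorb $\lambda$, and left alternativity in the form $\mu(\overline{\mu}a)={\cal{N}}(\mu)a$ to absorb $\mu$ are exactly the ingredients the paper's proof uses, and each individual regrouping you perform is a legitimate instance of alternativity. The problem is your very first move. The proposition is about $\big(\mu\cdot q_{\bf 0}(\lambda\cdot(z\mu))\big)\cdot\lambda$, with the outer triple product associated to the left; you replace this by $\mu\cdot\big(q_{\bf 0}(\lambda\cdot(z\mu))\cdot\lambda\big)$ and justify the switch only by saying it is ``the bracketing that makes the two absorptions legitimate.'' That is not a justification: for $X=q_{\bf 0}(\lambda\cdot(z\mu))$ the two bracketings genuinely differ. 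Take $\mu=e_1$, $\lambda=e_2$, $z=e_3$. Then $z\mu=-e_5$, $\lambda(z\mu)=-e_7$, so $X=e_7$, and the paper's multiplication table gives $(e_1e_7)e_2=e_6e_2=e_3$, whereas $e_1(e_7e_2)=e_1e_5=-e_3=q_{\bf 0}(e_3)$. So what you actually prove is the (true) identity $\mu\cdot\big(q_{\bf 0}(\lambda\cdot(z\mu))\cdot\lambda\big)=({\cal{N}}(\mu){\cal{N}}(\lambda))^{-3}q_{\bf 0}(z)$, not the displayed one; read literally, the displayed equation fails for this example. Your ``independent check'' with inverses computes $\mu\cdot\big((\lambda(z\mu))^{-1}\cdot\lambda\big)$, i.e.\ the same re-bracketed quantity, so it confirms your variant rather than the printed statement.

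For what it is worth, the paper's own proof has the mirror-image defect: it rewrites the left-hand side as $f(\lambda z)\cdot\lambda$ with $f(w)=\mu\cdot q_{\bf 0}(w\mu)$, which silently replaces the inner argument $\lambda\cdot(z\mu)$ by $(\lambda z)\cdot\mu$. With that inner argument the left-associated product does come out right: in the same example $(\lambda z)\mu=e_7$, $q_{\bf 0}(e_7)=-e_7$, and $\big(e_1(-e_7)\big)e_2=-e_6e_2=-e_3=q_{\bf 0}(e_3)$. So the statement needs one of two repairs --- either your outer re-bracketing or the paper's inner one --- and a correct proof must say explicitly which version it establishes and why the chosen regrouping is permitted. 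As written, your argument leaves the associator $[\mu,\,q_{\bf 0}(\lambda\cdot(z\mu)),\,\lambda]$ unaccounted for, and that associator does not vanish.
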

 \begin{proof} Here, we have to argue very carefully, since we do not have associativity. So, we do it step by step. First we note that 
 	$$
 	\mu \cdot \Big(q_{\bf 0}(z \mu) \Big) = \mu \cdot \Bigg(   
 	\frac{\overline{z \mu}}{|z\mu|^8}\Bigg) = 
 	\mu \cdot \Bigg( \frac{\overline{\mu} \;\overline{z}}{|\mu|^8|z|^8}\Bigg) 
 	= \frac{(\mu\;\overline{\mu}) \overline{z}}{|\mu|^8|z|^8} = \frac{1}{|\mu|^6} q_{\bf 0}(z)
 	$$
 	where we exploited the alternating property $\mu(\overline{\mu}\; \overline{z}) = (\mu \overline{\mu})(\overline{z})$. Here, we exploited a special property of the octonions that cannot be extended to sedenions or the following  Cayley-Dickson algebras. 
 	
 	Next we put $f(z):= \mu \cdot \Big(q_{\bf 0}(z \mu)   \Big) = \frac{1}{|\mu|^6} q_{\bf 0}(z)$. And now we can conclude that  
 	\begin{eqnarray*}
 		\Bigg(\mu \cdot q_{\bf 0}\Big(\lambda\cdot (z\mu) \Big)    \Bigg) \cdot \lambda &=& f(\lambda z) \cdot \lambda \\
 		& = & \frac{1}{|\mu|^6} q_{\bf 0}(\lambda z) \cdot \lambda \\
 		& = & \frac{1}{|\mu|^6} \Bigg(\frac{\overline{\lambda z}}{|\lambda z|^8}\Bigg)\cdot \lambda \\
 		& = & \frac{(\overline{z}\;\overline{\lambda})\cdot \lambda}{|\mu|^6|z|^8|\lambda|^8}	
 		= \frac{\overline{z} \cdot(\overline{\lambda}\cdot \lambda)}{|\mu|^6|z|^8|\lambda|^8}\\
 		&=& \frac{1}{|\mu|^6} \frac{1}{|\lambda|^6} q_{\bf 0}(z) = \frac{1}{({\cal{N}}(\mu) {\cal{N}}(\lambda))^3 }q_{\bf 0}(z).
 	\end{eqnarray*}
 \end{proof}
 \begin{remark}
  We wish to emphasize clearly that this formula holds for octonions. Our argumentation cannot be extended beyond octonions in the next steps of the usual Cayley-Dickson doubling,  at least not by using this chain of arguments, because we explicitly used the alternative property. 	
 \end{remark}

 As a direct consequence of the fact that $\frac{1}{({\cal{N}}(\mu) {\cal{N}}(\lambda))^3 }$ is real-valued  we obtain the following important statement
 \begin{proposition}\label{reg}
 For all $\lambda,\mu \in \mathbb{O} \backslash\{0\}$ the functions $	\Bigg(\mu \cdot q_{\bf 0}\Big(\lambda\cdot (z\mu) \Big)    \Bigg) \cdot \lambda$ and in particular $q_{\bf 0}(\lambda z) \cdot \lambda$ and $\mu\cdot q_{\bf 0}(z\mu)$ are left and right $\mathbb{O}$-regular at each $z \in \mathbb{O} \backslash\{0\}$. 
 \end{proposition}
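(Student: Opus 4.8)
The plan is to deduce this statement directly from Proposition~\ref{Prp49}, which has already done all the hard work. Fix $\lambda,\mu\in\mathbb{O}\backslash\{0\}$ and write $F(z):=\big(\mu\cdot q_{\bf 0}(\lambda\cdot(z\mu))\big)\cdot\lambda$. By Proposition~\ref{Prp49} we have $F(z)=c\,q_{\bf 0}(z)$, where $c:=({\cal{N}}(\mu){\cal{N}}(\lambda))^{-3}$ is a positive \emph{real} constant. Moreover, since $\mathbb{O}$ is a division algebra and $\lambda,\mu\neq 0$, the octonion $\lambda\cdot(z\mu)$ vanishes if and only if $z=0$; hence $F$ is defined and smooth on all of $\mathbb{O}\backslash\{0\}$ and has exactly the singular set $\{0\}$ of $q_{\bf 0}$.

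Then I would argue that a real scalar multiple of $q_{\bf 0}$ inherits both regularities. Recall (as noted in Subsection~4.1) that $q_{\bf 0}$ satisfies ${\cal{D}}q_{\bf 0}(z)=0$ and $q_{\bf 0}(z){\cal{D}}=0$ for all $z\neq0$. Since $c\in\R$ is central in $\mathbb{O}$, one has $c\,q_{\bf 0}(z)=q_{\bf 0}(z)\,c$ with no bracketing ambiguity, and as ${\cal{D}}$ is an $\R$-linear first-order operator with constant real coefficients, ${\cal{D}}(c\,q_{\bf 0})=c\,{\cal{D}}q_{\bf 0}=0$ and $(c\,q_{\bf 0}){\cal{D}}=c\,(q_{\bf 0}{\cal{D}})=0$ on $\mathbb{O}\backslash\{0\}$. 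This yields the general assertion. For the two functions singled out in the statement I would simply specialise Proposition~\ref{Prp49}: setting $\mu=1$ gives $q_{\bf 0}(\lambda z)\cdot\lambda={\cal{N}}(\lambda)^{-3}q_{\bf 0}(z)$, and setting $\lambda=1$ (equivalently, reading off the intermediate identity in the proof of Proposition~\ref{Prp49}) gives $\mu\cdot q_{\bf 0}(z\mu)={\cal{N}}(\mu)^{-3}q_{\bf 0}(z)$; both are again real multiples of $q_{\bf 0}$, so the same reasoning applies verbatim.

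I do not expect any genuine obstacle at this stage: the delicate non-associative manipulations---the applications of the alternative identities $\mu(\overline{\mu}\,\overline{z})=(\mu\overline{\mu})\overline{z}$ and $\overline{z}(\overline{\lambda}\lambda)=(\overline{z}\,\overline{\lambda})\lambda$---were already carried out to establish Proposition~\ref{Prp49}. The single point worth stating explicitly is that it is essential for the scaling factor to be \emph{real}: left (right) $\mathbb{O}$-regularity is \emph{not} preserved under multiplication by a genuine octonionic constant (this is precisely the failure of the module structure recorded in the earlier remark), so the proof must invoke the reality of $({\cal{N}}(\mu){\cal{N}}(\lambda))^{-3}$ rather than any property of $\mu$ or $\lambda$ individually.
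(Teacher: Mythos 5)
Your proposal is correct and follows essentially the same route as the paper, which states the proposition as a direct consequence of Proposition~\ref{Prp49} together with the fact that the factor $({\cal{N}}(\mu){\cal{N}}(\lambda))^{-3}$ is real, so that the functions in question are real scalar multiples of the two-sided regular kernel $q_{\bf 0}$. Your additional remarks (centrality of real scalars, $\R$-linearity of ${\cal{D}}$, and the specialisations $\mu=1$ and $\lambda=1$) simply make explicit what the paper leaves implicit.
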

\begin{remark}
	 Note again that in contrast to the Clifford analysis setting the property of the latter proposition is not immediate, because left (right) $\mathbb{O}$-regular functions do not form a right (left) $\mathbb{O}$-module as mentioned before. The property is true for the particular function $q_{\bf 0}$ but at least as far as we know it is not evident for any arbitrary left (right) $\mathbb{O}$-regular function $f$.  
\end{remark}
  
 Now let particularly $\Omega_8 = \mathbb{Z} + \mathbb{Z} \omega_1 + \cdots + \mathbb{Z}\omega_7$ ($\omega_0=1$) be a lattice with octonionic multiplication as defined in Section~3.  
 Suppose that $\lambda \in \mathbb{O} \backslash \mathbb{R}$ is a non-trivial multiplicator from a left ideal ${\cal{L}}$ with ${\cal{L}} \Omega \subseteq \Omega$ and assume that $\mu \in \mathbb{O} \backslash \mathbb{R}$ is a non-trivial multiplicator from a right ideal ${\cal{R}}$ with $\Omega {\cal{R}} \subseteq \Omega$. 
 
 We look at the associated octonionic left $\mathbb{O}$-regular Weierstra{\ss} $\zeta$-function
 $$
 \zeta(z) = \zeta(z,\Omega) = q_{\bf 0}(z) + \sum\limits_{\omega \in \Omega_8 \backslash\{0\}} \Big[
 q_{\bf 0}(z+\omega) - q_{\bf 0}(\omega) + \sum\limits_{j=1}^7  V_{\tau(j)}(z) \Big(q_{\tau(j)}(\omega) \Big)
 \Big].
 $$
 
 If $\Omega_8$ is such a lattice and ${\cal{L}}$ such a left ideal, then we can find a $\lambda \in {\cal{L}} \backslash \mathbb{R}$ such that $\lambda \omega \in \Omega_8$ for all $\omega \in \Omega_8$. 
 
 So, the function $\zeta(\lambda z) \cdot \lambda$ is a well-defined quasi-elliptic function on the same lattice $\Omega_8$, since $\lambda \omega \in \Omega_8$ for all $\omega \in \Omega_8$. It is easy to see that it is at least harmonic when applying Weierstra{\ss} convergence theorem to each term of the series. 
 According to  Proposition~\ref{reg} the term $q_{\bf 0}(\lambda z) \lambda$ is   left and right $\mathbb{O}$-regular and hence harmonic. The terms $V_{\tau(j)}(z) \Big(q_{\tau(j)}(\omega)\Big)$ are all linear and therefore in the kernel of the Laplacian. Since the Laplacian is a scalar operator, one has $\Delta 
 [ q_{\bf 0}(\lambda z + \omega) \lambda] = \Delta 
 [q_{\bf 0}(\lambda z + \omega)]\lambda$. So since $0 = \Delta 
 [q_{\bf 0}(\lambda z) \lambda]$ and since $\mathbb{O}$ is a division algebra one also  has $\Delta [q_{\bf 0}(\lambda z)] = 0$. Since $\omega \neq 0$ one can find a $t \in \mathbb{O}$ such that $\omega = \lambda t$. Applying a linear shift in the  argument also leads to the fact that $\Delta[q_{\bf 0}(\lambda z+\omega)] =  \Delta[q_{\bf 0}(\lambda z+\lambda t)] = \Delta[\Big(q_{\bf 0}(z + t)\Big)\lambda] = 0$, since the differential remains invariant under the shift $z+t$.

 Analogously, there are elements $\mu \in {\mathcal{R}} \backslash \mathbb{R}$ such that $\omega \mu \in \Omega_8$ for all $\omega \in \Omega_8$ so that the function $\mu \cdot \zeta(z \mu)$ is also a well-defined quasi-elliptic function again on the same lattice, since also $\omega \mu \in \Omega_8$ for all $\omega \in \Omega_8$. Here again, we can establish that this function is harmonic at least. 
 
 More generally, and bearing in mind the non-associativity, the {\em two} functions 
 $$
 \zeta^1_{\lambda,\mu}(z) := \Big( \mu \cdot \zeta(\lambda\cdot(z\mu)     )  \Big) \cdot \lambda
 $$
 and
 $$
  \zeta^2_{\lambda,\mu}(z) := \mu \cdot \Big(\zeta((\lambda z)\cdot \mu) \cdot \lambda  \Big)
 $$
are well-defined at least harmonic quasi-elliptic functions for all $\lambda \in {\cal{L}}, \mu \in {\cal{R}}$ since 
$$
(\lambda \Omega_8) \cdot \mu \subseteq \Omega_8 \mu \subseteq \Omega_8
$$ 
and 
$$
\lambda \cdot (\Omega_8 \mu) \subseteq \lambda \Omega_8 \subseteq \Omega_8
$$
in view of the CM-property. 

Completely analogously,  one can conclude in the same way that $\zeta^2_{\lambda,\mu}(z)$ is at least harmonic, too.

Note that both functions in general differ from each other as a consequence of the lack of associativity. For the sake of convenience we focus up from now on the function $\zeta_{\lambda,\mu}(z):=\zeta^1_{\lambda,\mu}(z)$ since the other version can be treated analogously. 

The function $\Big( \mu \cdot \zeta(\lambda\cdot(z\mu))\Big)\cdot \lambda$ is singular if and only if $\lambda \cdot (z \mu) = \omega$ for a lattice point $\omega \in \Omega_8$. This is equivalent to $(z\mu)=\lambda^{-1}\omega \Longleftrightarrow z = (\lambda^{-1}\omega)\cdot \mu^{-1}$. Thus, the function $\zeta^1_{\lambda,\mu}(z)$ has isolated point singularities at exactly the points $z = (\lambda^{-1}\omega)\cdot \mu^{-1}$ where $\omega$ runs through $\Omega_8$. 

By a counting argument we obtain the $\zeta_{\lambda,\mu}$ has exactly ${\cal{N}}(\lambda \mu)^4$-many isolated point singularities in the fundamental perioc cell
$$
{\cal{F}}:=\{x = \alpha_0 + \alpha_1 \omega_1 + \cdots +\alpha_7 \omega_7 \mid 0 \le \alpha_j < 1,\;j\in \{0,\ldots,7\}\},
$$
which in particular contains $0$.  

The set of all these singularities that lie in the fundamental set will be denoted by ${\cal{V}}_{\lambda,\mu;\Omega}$ in all that follows. 

\begin{remark}
	In the simple case where $\lambda=2$ and $\mu=1$ this set consists exactly of those points where the coordinates either have the value zero or $1/2$, whose cardinality evidently equals $2^8 ={\cal{N}}(2)^4$.     
\end{remark}

Now we have all pre-requisites to formulate and prove our main theorem
\begin{theorem}
Let $\Omega_8 = \mathbb{Z} + \mathbb{Z} \omega_1 + \cdots + \mathbb{Z} \omega_7$, $ (\omega_0:=1)$ be an octonionic lattice with octonionic multiplication with the properties and notations as described previuously. The trace of the octonionic division values of the $\mathbb{O}$-regular elliptic functions $\wp_{\tau(i)}$, ($i = 1,\ldots,7$) can be expressed by   
$$
\sum\limits_{v \in {\cal{V}}_{\lambda,\mu;\Omega} \backslash\{0\}} \wp_{\tau(i)}(v) 
= - \frac{({\cal{N}}(\lambda) {\cal{N}}(\mu)  )^3  }{\det(W)} \Bigg[
\sum\limits_{h=0}^7 \Theta_{h_i}\Big( \mu \cdot (\sum\limits_{j=0}^7 n_{h_j} \eta_j)\cdot \lambda
\Big)
- {\cal{N}}(\lambda) {\cal{N}}(\mu) \sum\limits_{h=0}^7\Theta_{h_i} \eta_h
\Bigg],
$$	
where $\Theta_{hi}$ denotes the adjoint determinant associated with the lattice component $\omega_{hi}$ and where ${\cal{V}}_{\lambda,\mu;\Omega} :=\{v \in {\cal{F}} \mid v = (\lambda^{-1}\omega)\cdot\mu^{-1},\;\omega \in \Omega_8\}$.  
\end{theorem}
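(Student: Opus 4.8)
The plan is to realise the desired trace sum as (a multiple of) the constant gradient of an auxiliary $\Omega_8$-periodic harmonic function, and then to compute that constant in two different ways. First I record the local structure of $\zeta_{\lambda,\mu}(z)=\big(\mu\cdot\zeta(\lambda\cdot(z\mu))\big)\cdot\lambda$. Near $w=0$ one has $\zeta(w)=q_{\bf 0}(w)+\rho(w)$ with $\rho$ left $\mathbb{O}$-regular near $0$; since $\frac{\partial\zeta}{\partial x_i}=\wp_{\tau(i)}$ and $\wp_{\tau(i)}-q_{\tau(i)}$ is regular and vanishes at the origin (and $\frac{\partial\zeta}{\partial x_0}$ is determined by the other partials through ${\cal{D}}\zeta=0$), the regular part $\rho$ vanishes at the origin together with \emph{all} its first partial derivatives. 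Combining this with Proposition~\ref{Prp49}, the function $\zeta_{\lambda,\mu}$ is left $\mathbb{O}$-meromorphic with isolated point singularities exactly at the points $v=(\lambda^{-1}\omega)\cdot\mu^{-1}$, $\omega\in\Omega_8$, and at each such $v$ its principal part equals $\frac{1}{({\cal{N}}(\lambda){\cal{N}}(\mu))^3}\,q_{\bf 0}(z-v)$; moreover its regular part at $v=0$, namely $S(z):=\big(\mu\cdot\rho(\lambda\cdot(z\mu))\big)\cdot\lambda$, satisfies $S(0)=0$ and $\frac{\partial S}{\partial x_i}(0)=0$ (chain rule together with the vanishing first derivatives of $\rho$ at $0$).

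\textbf{Quasi-periodicity.} Because $\lambda\in{\cal{L}}$, $\mu\in{\cal{R}}$ and ${\cal{L}}\Omega_8\subseteq\Omega_8\supseteq\Omega_8{\cal{R}}$, the element $\lambda\cdot(\omega_h\mu)$ lies in $\Omega_8$, so we may write $\lambda\cdot(\omega_h\mu)=\sum_{j=0}^{7}n_{h_j}\omega_j$ with integers $n_{h_j}$. From $\zeta(w+\omega)-\zeta(w)=\sum_{j}m_j\eta_j$ for $\omega=\sum_j m_j\omega_j$ (the additive extension of $\zeta(w+\omega_h)-\zeta(w)=\eta_h$) and distributivity we obtain
$$
\zeta_{\lambda,\mu}(z+\omega_h)-\zeta_{\lambda,\mu}(z)=\Big(\mu\cdot\big(\textstyle\sum_{j=0}^{7}n_{h_j}\eta_j\big)\Big)\cdot\lambda=:E_h .
$$

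\textbf{The auxiliary function and two evaluations.} Set
$$
P(z):=\zeta_{\lambda,\mu}(z)-\frac{1}{({\cal{N}}(\lambda){\cal{N}}(\mu))^3}\sum_{v\in{\cal{V}}_{\lambda,\mu;\Omega}}\zeta(z-v).
$$
By the first paragraph the principal parts cancel termwise, so $P$ extends to a harmonic function on all of $\mathbb{O}$; and by the second paragraph, the quasi-periodicity of $\zeta$, the counting $|{\cal{V}}_{\lambda,\mu;\Omega}|={\cal{N}}(\lambda\mu)^4$ and the identity $({\cal{N}}(\lambda){\cal{N}}(\mu))^{-3}{\cal{N}}(\lambda\mu)^4={\cal{N}}(\lambda){\cal{N}}(\mu)$,
$$
P(z+\omega_h)-P(z)=E_h-{\cal{N}}(\lambda){\cal{N}}(\mu)\,\eta_h .
$$
Each $\frac{\partial P}{\partial x_i}$ is therefore $\Omega_8$-periodic, harmonic and bounded (being continuous on the compact torus $\mathbb{O}/\Omega_8$), hence a constant octonion $c_i$ by the harmonic Liouville theorem, so $P(z)=\sum_{i=0}^{7}c_i x_i+d$. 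Inserting this into the quasi-periodicity relation gives the linear system $\sum_{i=0}^{7}\omega_{h,i}c_i=E_h-{\cal{N}}(\lambda){\cal{N}}(\mu)\eta_h$ ($h=0,\dots,7$), which by Cramer's rule ($\det(W)\neq0$ since the $\omega_h$ are $\mathbb{R}$-independent) yields $c_i=\frac{1}{\det(W)}\sum_{h=0}^{7}\Theta_{h_i}\big(E_h-{\cal{N}}(\lambda){\cal{N}}(\mu)\eta_h\big)$. On the other hand, $\frac{\partial P}{\partial x_i}(z)=\frac{\partial\zeta_{\lambda,\mu}}{\partial x_i}(z)-({\cal{N}}(\lambda){\cal{N}}(\mu))^{-3}\sum_{v}\wp_{\tau(i)}(z-v)=c_i$; letting $z\to0$, the singular parts of the two $v=0$ terms are both equal to $({\cal{N}}(\lambda){\cal{N}}(\mu))^{-3}q_{\tau(i)}(z)$ and cancel, their regular parts vanish at $0$ (by the first paragraph, and because $\wp_{\tau(i)}-q_{\tau(i)}$ vanishes at $0$), and the $v\neq0$ terms contribute $({\cal{N}}(\lambda){\cal{N}}(\mu))^{-3}\sum_{v\neq0}\wp_{\tau(i)}(v)$ ($\wp_{\tau(i)}$ being even, as $\zeta$ is odd). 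Hence $c_i=-({\cal{N}}(\lambda){\cal{N}}(\mu))^{-3}\sum_{v\in{\cal{V}}_{\lambda,\mu;\Omega}\backslash\{0\}}\wp_{\tau(i)}(v)$. Equating the two expressions for $c_i$ and substituting $E_h=\big(\mu\cdot(\sum_j n_{h_j}\eta_j)\big)\cdot\lambda$ gives the asserted identity.

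\textbf{Main obstacle.} The delicate work is concentrated in the first two paragraphs. The non-associative bracketings must be tracked exactly, and this is precisely where Proposition~\ref{Prp49}, and hence the alternative law of $\mathbb{O}$, is indispensable, so that no analogue of this argument is available in ${\cal{C}}_k$ for $k>3$. The technically crucial input is the \emph{first-order} vanishing of the regular part $S$ of $\zeta_{\lambda,\mu}$ at the origin: it is exactly this that kills a would-be constant $\frac{\partial S}{\partial x_i}(0)$ in the last step and produces the clean closed form; were that derivative nonzero, an additional additive octonionic term of an a~priori unclear nature would appear in the trace formula.
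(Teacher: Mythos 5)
Your proposal is correct and follows essentially the same route as the paper: identify the principal parts of $\zeta_{\lambda,\mu}$ via Proposition~\ref{Prp49}, subtract the periodized sum $\sum_v\zeta(\cdot\pm v)$ to get a globally harmonic $\Omega_8$-quasi-periodic function, invoke the harmonic Liouville theorem to reduce it to an affine function, solve for its gradient by combining the Legendre relation with the CM-relation $\lambda\cdot(\omega_h\mu)=\sum_j n_{h_j}\omega_j$ and Cramer's rule with the adjoint determinants $\Theta_{h_i}$, and finally identify that gradient with the negative trace sum by letting $z\to0$ and using parity. The only cosmetic differences are your sign convention $\zeta(z-v)$ versus the paper's $\zeta(z+v)$ (harmless, since the singular set is symmetric mod $\Omega_8$ and $\wp_{\tau(i)}$ is even and periodic) and your use of a general affine ansatz $\sum_i c_ix_i+d$ in place of the paper's Fueter-polynomial ansatz $\sum_j V_{\tau(j)}(z)C_j+C$, which lead to the same constants for $i=1,\ldots,7$.
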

\begin{proof} 
As a consequence of formula~(\ref{q0trafo}) we may infer that the Laurent expansion of the function $\zeta_{\lambda,\mu}$ centered at zero has the form 
$$
\zeta_{\lambda,\mu}(z) = \Bigg(\mu \zeta(\lambda\cdot(z \mu))\Bigg)\cdot \lambda = \frac{1}{({\cal{N}}(\mu) {\cal{N}}(\lambda))^3} q_{\bf 0}(z)+ A(z),
$$
where $A$ is a function that is at least harmonic in some neighborhood of $0$. 

Conversely, the function 
$$
\sum\limits_{v \in {\cal{V}}_{\lambda,\mu;\Omega}} \zeta(z + v)
$$
can be written in the form $q_{\bf 0}(z) + B(z)$, where also $B(z)$ is a function that is definetely left $\mathbb{O}$-regular around $0$. Therefore, the difference function
\begin{equation}\label{difference}
f(z) := \zeta_{\lambda,\mu}(z)- \frac{1}{({\cal{N}}(\mu) {\cal{N}}(\lambda))^3} \cdot \sum\limits_{v \in {\cal{V}}_{\lambda,\mu;\Omega}} \zeta(z+v)
\end{equation}
is at least harmonic around $0$, too. Similarly, one gets the same result if one considers the Laurent expansion around another singular point $v \in {\cal{V}}_{\lambda,\mu;\Omega}$. 

The same is true for all the partial derivatives $(i \in \{ 0,\ldots,7\})$
\begin{eqnarray}\label{diffdifference}
\frac{\partial }{\partial x_i} f(z) &=& \frac{\partial }{\partial x_i} \zeta_{\lambda,\mu}(z) - \frac{1}{({\cal{N}}(\mu) {\cal{N}}(\lambda))^3} \sum\limits_{v \in {\cal{V}}_{\lambda,\mu;\Omega}} \frac{\partial }{\partial x_i} \zeta(z+v) \nonumber \\
& = & \frac{\partial }{\partial x_i} \zeta_{\lambda,\mu}(z) - \frac{1}{({\cal{N}}(\mu) {\cal{N}}(\lambda))^3} \sum\limits_{v \in {\cal{V}}_{\lambda,\mu;\Omega}} \wp_{\tau(i)}(z+v). 
\end{eqnarray}
 
Since the functions $\frac{\partial }{\partial x_i} \zeta_{\lambda,\mu}(z)$ and $\wp_{\tau(i)}(z+v)$ are all $\Omega_8$-periodic, each function $f_i := \frac{\partial f}{\partial x_i}$ is $\Omega_8$-periodic and  must be at least  harmonic on the entire algebra $\mathbb{O}$, since it has no singularity inside of  ${\cal{V}}_{\lambda,\mu;\Omega}$. So, in view of Liouville's theorem. there are octonionic constants $C_i \in \mathbb{O}$ such that $f_i(z) = C_i$ for all $z$. Now we make the following ansatz
\begin{equation}\label{zetasum}
\zeta_{\lambda,\mu}(z)= \frac{1}{({\cal{N}}(\mu) {\cal{N}}(\lambda))^3} \sum\limits_{v \in {\cal{V}}_{\lambda,\mu;\Omega}} \zeta(z+v) + \sum\limits_{j=1} V_{\tau(j)}(z) C_j + C
\end{equation}
where $C$ is a further octonionic constant.  
\par\medskip\par 
Now let $\omega_h$ be a primitive period of $\Omega_8$ ($h=0,1,\ldots,7$). Then we have
\begin{eqnarray*}
\zeta_{\lambda,\mu}(z + \omega_h) &=& \Bigg(\mu \cdot \zeta\Big(\lambda \cdot((z+\omega)\mu)  \Big)   \Bigg) \cdot \lambda \\
& = & \frac{1}{({\cal{N}}(\mu) {\cal{N}}(\lambda))^3} \sum\limits_{v \in {\cal{V}}_{\lambda,\mu;\Omega}} \zeta(z+\omega_h+v) + \sum\limits_{j=1}^7  V_{\tau(j)}(z+\omega_h) C_j + C
\end{eqnarray*}	
for all $h \in \{0,1,\ldots,7\}$. 
\par\medskip\par
Now the crucial aspect is that the lattice $\Omega_8$ has octonionic multiplication of the form $(\lambda \Omega_8) \mu \subseteq \Omega_8$ and $\lambda(\Omega_8 \mu) \subseteq \Omega$. Therefore, there exist integers $n_{h_j} \in \mathbb{Z}$ such that
\begin{equation}\label{CMproperty}
\lambda\cdot (\omega_h \mu) = \sum\limits_{j=0}^7 n_{h_j} \omega_j.
\end{equation} 
In view of the Legendre relation that we stated in the previous subsection we have the additive relation 
\begin{equation}\label{CMLegendre}
\zeta(\lambda(z \mu)+ \lambda(\omega_h \mu)) = \zeta(\lambda\cdot(z \mu)) + \sum\limits_{j=0}^7 n_{h_j} \eta_j 
\end{equation}
with the octonionic Legendre constants $\eta_0,\ldots,\eta_7$. 

Applying formula~(\ref{zetasum}) we get 
$$
\Bigg(\mu\cdot \zeta\Big(\lambda( (z+\omega_h)\cdot \mu        )\Big)\Bigg)\cdot \lambda	 =  \frac{1}{({\cal{N}}(\mu) {\cal{N}}(\lambda))^3} \sum\limits_{v \in {\cal{V}}_{\lambda,\mu;\Omega}} \zeta(z+\omega_h+v) 
 + \sum\limits_{j=1}^7 V_{\tau(j)}(z+\omega_h) C_j + C	
$$
Using the Legendre relation, the latter equation is equivalent to
\begin{eqnarray*}
\Bigg(\mu\cdot \zeta\Big( \lambda\cdot (z \mu)+\lambda\cdot (\omega_h \mu)       \Big)\Bigg)\cdot \lambda	& = & \frac{1}{({\cal{N}}(\mu) {\cal{N}}(\lambda))^3} \sum\limits_{v \in {\cal{V}}_{\lambda,\mu;\Omega}}\Big[ \zeta(z+v) +\eta_h \Big]\\
& & + \sum\limits_{j=1}^7 V_{\tau(j)}(z+\omega_h) C_j + C. 
\end{eqnarray*}
Applying~(\ref{CMLegendre}) to the previous equation leads to 
\begin{eqnarray*}
\Bigg(\mu\cdot \zeta\Big(\lambda\cdot(z \mu) \Big)\Bigg)\cdot \lambda	+ \Big(\mu(\sum_{j=0}^7 n_{h_j} \eta_j)\Big)\cdot \lambda & = & 
	 \frac{1}{({\cal{N}}(\mu) {\cal{N}}(\lambda))^3} \sum\limits_{v \in {\cal{V}}_{\lambda,\mu;\Omega}}\Big[ \zeta(z+v) +\eta_h \Big]\\
	 & & + \sum\limits_{j=1}^7  V_{\tau(j)}(z) C_j + C \\
	 & & + \sum\limits_{j=1}^7  V_{\tau(j)} (\omega_h) C_j.
\end{eqnarray*}
Since the first term of the left-hand side equals the expression of the sum of the first two terms of the right-hand side in view of~(\ref{zetasum}), we obtain the relation
$$
\Big( \mu \cdot (\sum\limits_{j=0}^7 n_{h_j}\eta_j )\Big)\cdot \lambda = \frac{1}{({\cal{N}}(\mu) {\cal{N}}(\lambda))^3} \sum\limits_{v \in {\cal{V}}_{\lambda,\mu;\Omega}} \eta_h + \sum\limits_{j=1}^7  V_{\tau(j)}(\omega_h) C_j.
$$
Next, since the cardinality of ${\cal{V}}_{\lambda,\mu;\Omega}$ equals ${\cal{N}}(\lambda \mu)^4$, we have 
$\sum_{v \in {\cal{V}}_{\lambda,\mu;\Omega}} \eta_h = {\cal{N}}(\lambda \mu)^4 \cdot \eta_h$.  
Thus, we arrive at 
\begin{equation}\label{stern1}
\Big(\mu \cdot (\sum\limits_{j=0}^7 n_{h_j} \eta_j)  \Big)\cdot \lambda = {\cal{N}}(\lambda \mu) \eta_h + \sum\limits_{j=1}^7 (\omega_{h_j} - e_j \omega_{h_0}) C_j,
\end{equation}
where we put $\omega_h = \sum\limits_{j=0}^7 \omega_{h_j} e_j$ for the representation of the primitive periods $\omega_h$ in the coordinates of the canonical basis elements $e_0,e_1,\ldots,e_7$. 
\par\medskip\par 
Next, let us write $\Theta_{h_j}$ for the adjoint determinant associated with the element $\omega_{h_j}$. Then, classical linear algebra tells us that 
\begin{equation}\label{stern2}
\sum\limits_{h=0}^7 \omega_{h_i} \Theta_{h_l} = \delta_{il} \det(W).
\end{equation}
Combining this formula with~(\ref{stern1}), we obtain that  
$$
\sum\limits_{h=0}^7 \Big[\Theta_{h_i}\Big(\mu \cdot (\sum\limits_{j=0}^7 n_{h_j} \eta_j)  \Big)\cdot \lambda  \Big] - {\cal{N}}(\lambda) {\cal{N}}(\mu) \sum\limits_{h=0}^7 \Theta_{h_i} \eta_h = \sum\limits_{h=0}^7 \sum\limits_{j=1}^7 
\Big(\Theta_{h_i} \omega_{h_j} - e_j \underbrace{\Theta_{h_i} \omega_{h_0}}_{=0}\Big) C_j.
$$
The underbraced expression $\Theta_{h_i} \omega_{h_0}$ vanishes, because we always have $\delta_{i0}=0$ since $i \neq 0$. 

In view of~(\ref{stern2}) the latter equation simplifies to 
$$
\sum\limits_{h=0}^7 \Big[\Theta_{h_i}\Big(\mu \cdot (\sum\limits_{j=0}^7 n_{h_j} \eta_j)  \Big)\cdot \lambda  \Big] - {\cal{N}}(\lambda) {\cal{N}}(\mu) \sum\limits_{h=0}^7 \Theta_{h_i} \eta_h = C_i \det(W).
$$
Thus, for $i=1,\ldots,7$ we obtain:
$$
C_i = \frac{\sum\limits_{h=0}^7 \Big[\Theta_{h_i}\Big(\mu \cdot (\sum\limits_{j=0}^7 n_{h_j} \eta_j)  \Big)\cdot \lambda  \Big] - {\cal{N}}(\lambda) {\cal{N}}(\mu) \sum\limits_{h=0}^7 \Theta_{h_i} \eta_h}{\det(W)}.
$$
Now we are in position to calculate the traces of the octonionic Weierstra{\ss}' functions. 

First of all we recall that 
$$
\zeta_{\lambda,\mu}(z) - \frac{1}{({\cal{N}}(\lambda){\cal{N}}(\mu))^3} \zeta(z) = 
\frac{1}{({\cal{N}}(\lambda){\cal{N}}(\mu))^3}  \sum\limits_{v \in {\cal{V}}_{\lambda,\mu;\Omega}\backslash\{0\}} \zeta(z+v) + \sum\limits_{j=1}^7 V_{\tau(j)}(z) C_j + C.
$$
Since $\zeta_{\lambda,\mu}$ is an odd function, $\zeta_{\lambda,\mu}$ is an odd function, too. Consequently, 
$$
\zeta_{\lambda,\mu}(z) - \frac{1}{({\cal{N}}(\lambda){\cal{N}}(\mu))^3} \zeta(z) = {\cal{O}}(z),
$$
around zero because the singular parts cancel out, following from Proposition~\ref{Prp49}. In view of $\lim\limits_{z \to 0}\wp_{\tau(i)}(z)-q_{\tau(i)}(z) = 0$ for all $i=1,\ldots,7$, which is clear from the series representation, because $\lim\limits_{z \to 0} q_{\tau(i)}(z+\omega)-q_{\tau(i)}(\omega)=0$ one even has that the expression on the right-hand side is of order ${\cal{O}}(z^3)$.

So, in particular, 
$$
\lim\limits_{z \to 0} \Big(\zeta_{\lambda,\mu}(z) - \frac{1}{({\cal{N}}(\lambda){\cal{N}}(\mu))^3} \zeta(z) \Big) = 0,
$$
and since $\lim\limits_{z \to 0} V_{\tau(j)}(z)=0$, one has 
$$
\frac{1}{({\cal{N}}(\lambda){\cal{N}}(\mu))^3} \sum\limits_{v \in {\cal{V}}_{\lambda,\mu;\Omega}\backslash\{0\}} \zeta(v) = -C. 
$$

Next, 
$$
\frac{\partial \zeta_{\lambda,\mu}}{\partial x_i} - \frac{1}{({\cal{N}}(\lambda){\cal{N}}(\mu))^3} \wp_{\tau(i)}
$$
is an even expression and of the form ${\cal{O}}(z^2)$ around the origin. So,  
$$
\lim\limits_{z \to 0}\Big(   
\frac{\partial \zeta_{\lambda,\mu}(z)}{\partial x_i} - \frac{1}{({\cal{N}}(\lambda){\cal{N}}(\mu))^3} \wp_{\tau(i)}(z)
\Big) = 0.
$$
On the other hand 
$$
\frac{\partial \zeta_{\lambda,\mu}(z)}{\partial x_i} - \frac{1}{({\cal{N}}(\lambda){\cal{N}}(\mu))^3} \wp_{\tau(i)}(z) = \frac{1}{({\cal{N}}(\lambda){\cal{N}}(\mu))^3} \sum\limits_{v \in {\cal{V}}_{\lambda,\mu;\Omega} \backslash\{0\}} \wp_{\tau(i)}(z+v) + \sum\limits_{j=1}^7 \frac{\partial }{\partial x_j} V_{\tau(i)}(z) C_j.
$$
 
Thus, with the same limit argument we obtain that 
$$
\frac{1}{({\cal{N}}(\lambda){\cal{N}}(\mu))^3}
\sum\limits_{v \in {\cal{V}}_{\lambda,\mu;\Omega} \backslash\{0\}} \wp_{\tau(i)}(v) = -C_i
$$

If we apply the formula for the expression of the constants $C_i$ derived above, then  we finally arrive at the desired trace formula for the octonionic CM-division values of the associated $\wp_{\tau(i)}$-function 
$$
\sum\limits_{v \in {\cal{V}}_{\lambda,\mu;\Omega}\backslash\{0\}} \wp_{\tau(i)}(v) 
= - \frac{({\cal{N}}(\lambda) {\cal{N}}(\mu)  )^3  }{\det(W)} \Bigg[
\sum\limits_{h=0}^7 \Theta_{h_i}\Big( \mu \cdot (\sum\limits_{j=0}^7 n_{h_j} \eta_j)\cdot \lambda
\Big)
- {\cal{N}}(\lambda) {\cal{N}}(\mu) \sum\limits_{h=0}^7\Theta_{h_i} \eta_h
\Bigg]
$$
for each $i=1,\ldots,7$, 
and the theorem is proved.
\end{proof}
\begin{remark}
For the sake of completeness we want to re-emphasize that the constant $C$ gives up to a minus sign multiplied with the triple of the quadratic norm expressions exactly the value of the trace of the CM-division values of the $\mathbb{O}$-regular Weierstra{\ss} $\zeta$-function:
$$
\sum\limits_{v \in {\cal{V}}_{\lambda,\mu;\Omega}\backslash\{0\}} \zeta(v) = - ({\cal{N}}(\lambda){\cal{N}}(\mu))^3 \cdot C. 
$$ 
Unfortunately, unlike for the constants $C_i$, we have no further information on the algebraic nature of the constant $C$ so far. 
\end{remark}

\subsection{Final remark and outlook}
The trace of the octonionic division values of the functions $\wp_{\tau(i)}$ is an octonion whose  real components are elements from the field that is generated by the algebraic number field of the real components of the primitive periods $\omega_h$ (canonically from a triquadratic number field) and by the real components of the octonionic Legendre constants $\eta_h$. It would be a great goal to figure out under which conditions these are algebraic. In that case, the division values of the associated octonionic elliptic functions could play a key role in the construction of algebraic number fields contributing in another way to Hilbert's twelfth problem from \cite{Hilbert}.      
 
Furthermore, it would be an essential question whether similar constructions could be carried over to the more general framework of Cayley-Dickson algebras or even more generally to graded deformed $\mathbb{R}_F \mathbb{Z}^n$-algebras that we discussed in \cite{AlbuquerqueKra2008}. As explained in Section 3 the theory of CM-lattices is available for this much more general setting. However, in the proofs and in the constructions of Section 4.3 we applied at several places the alternative property which is true in the octonionic case but not anymore beyond this when proceeding with the usual Cayley-Dickson doubling. Maybe some of these results can be carried over to the context of the algebra of the $2^n$-ons considered by D. Warren in \cite{WarrenDSmith}, indicating another possibility for further future research in this kind of direction. 

A possible field of application consists also in understanding whether these generalizations of elliptic functions may play a similar role in $G_2$-gauge theories, analogously to the role of quaternionic regular elliptic functions playing for $SU(2)$-instantons, cf. \cite{GTBook}.

\end{document}